\def\urls#1{{\small\url{#1}}}
\def\bfmath#1{{\mathchoice{\mbox{\boldmath$#1$}}%
{\mbox{\boldmath$#1$}}%
{\mbox{\boldmath$\scriptstyle#1$}}%
{\mbox{\boldmath$\scriptscriptstyle#1$}}}}
\def\rd#1{{\color{red}#1}}
\newcolumntype{P}[1]{>{\centering\arraybackslash}p{#1}}
\newcommand{\utwi}[1]{\mbox{\boldmath $#1$}}
\renewcommand{\hat}{\widehat}
\renewcommand{\tilde}{\widetilde}
\newcommand{\epsy}{\epsilon}
\newcommand{\cD}{{\cal D}}
\newcommand{\cL}{{\cal{L}}}
\newcommand{\cN}{{\cal N}}
\newcommand{\cT}{{\cal T}}
\newcommand{\cX}{{\cal X}}
\newcommand{\bg}{{\bfmath{g}}}
\newcommand{\bh}{{\bfmath{h}}}
\newcommand{\bl}{{\bfmath{\ell}}}
\newcommand{\bx}{{\bfmath{x}}}
\newcommand{\bv}{{\bf v}}
\newcommand{\bw}{{\bfmath{w}}}
\newcommand{\bz}{{\bfmath{z}}}
\newcommand{\by}{{\bfmath{y}}}
\newcommand{\bC}{{\bfmath{C}}}
\newcommand{\bD}{{\bfmath{D}}}
\newcommand{\bJ}{{\bfmath{J}}}
\newcommand{\bI}{{\bfmath{I}}}
\newcommand{\bbeta}{{\utwi{\beta}}}
\newcommand{\blambda}{{\utwi{\lambda}}}
\newcommand{\bphi}{{\utwi{\phi}}}
\newcommand{\bxi}{{\utwi{\xi}}}
\newcommand{\bSigma}{{\utwi{\Sigma}}}
\newcommand{\reals}{\mathbb{R}}
\newcommand{\sfT}{\textsf{T}}
\newcommand{\proj}{\mathsf{Proj}}
\newtheorem{lemma}{Lemma}
\newtheorem{theorem}{Theorem}
\theoremstyle{definition}
\newtheorem{assumption}{Assumption}
\theoremstyle{definition}
\newtheorem{remark}{Remark}
\newcommand{\andrey}[1]{\textcolor{blue}{(Andrey says:  #1)}}
\newcommand{\yue}[1]{\textcolor{violet}{(Yue says:  #1)}}
\def\sfT{{\hbox{\tiny \textsf{T}}}}
\def\eqdef{\mathbin{:=}}
\newcommand\TTop{\rule{0pt}{2.6ex}}       
\newcommand\BBot{\rule[-1.2ex]{0pt}{0pt}} 
\newcommand{\probe}{\xi}
\newcommand{\bfprobe}{\bxi}
\newlength{\noteWidth}
\long\def\notes#1{\ifinner
             {\tiny #1}
             \else
              \marginpar{\parbox[t]{\noteWidth}{\raggedright\tiny #1}}
               \fi}
\newcommand{\shortoverline}[1]{\overline{#1\mkern1mu}\mkern2mu }
\newcommand{\shortunderline}[1]{\underline{#1\mkern-3mu}\mkern4mu }
\begin{document}

\title{\LARGE \bf 
Model-Free Primal-Dual Methods for Network Optimization with Application to Real-Time Optimal Power Flow}

\author{Yue Chen$^{1}$, Andrey Bernstein$^{1}$, Adithya Devraj$^{2}$, Sean Meyn$^{2}$%
	\thanks{*This work was authored (in part) by the National Renewable Energy Laboratory, operated by Alliance for Sustainable Energy, LLC, for the U.S. Department of Energy (DOE) under Contract No. DE-AC36-08GO28308.}
	\thanks{$^{1}$Y. Chen and A. Bernstein are with the Power Systems Engineering Center, National Renewable Energy Laboratory, Golden, CO 80401, USA
		{\tt\small yue.chen@nrel.gov, andrey.bernstein@nrel.gov}}%
	\thanks{$^{2}$A. Devraj and S. Meyn are with the Department of Electrical and Computer Engineering, University of Florida,       Gainesville, FL 32611, USA
        {\tt\small adithyamdevraj@ufl.edu, meyn@ece.ufl.edu}}%
}

\maketitle

\begin{abstract}
    This paper examines the problem of real-time optimization of networked systems 
    and develops online algorithms that steer the system towards the optimal trajectory without explicit knowledge of the system model. 
    The problem is modeled as a dynamic optimization problem with time-varying performance objectives and engineering constraints.
    The design of the algorithms leverages the online zero-order primal-dual projected-gradient method. In particular, the primal step that involves the gradient of the objective function (and hence requires networked systems model) is replaced by its zero-order approximation with two function evaluations using a deterministic perturbation signal. The evaluations are performed using the measurements of the system output, hence giving rise to a feedback interconnection, with the optimization algorithm serving as a feedback controller. The paper provides some insights on the stability and tracking properties of this interconnection. Finally, the paper applies this methodology to  a real-time optimal power flow problem in power systems, and shows its efficacy on the IEEE 37-node distribution test feeder for reference power tracking and voltage regulation.  
\end{abstract}


\IEEEpeerreviewmaketitle

\section{Introduction}

This paper considers the problem of real-time optimization of networked systems, where the desired operation of the network is formulated as a \emph{dynamic} optimization problem with time-varying performance objectives and engineering constraints \cite{Rahili2015,Fazlyab2016a,bernstein2019}. To illustrate the idea, consider the following time-varying model that describes the input-output behaviour of the  systems in the network:
\begin{align}
\label{eq:model}
\by(t) = \bh_t(\bx(t))
\end{align}
where $\bx(t) \in \mathbb{R}^n$ is a vector representing controllable inputs; $\by(t) \in \mathbb{R}^m$ collects the outputs of the network; and $\bh_t(\cdot): \mathbb{R}^n \rightarrow \mathbb{R}^m$ is a time-varying map representing the system model. Suppose   that the desired behaviour of the system is defined via a time-varying optimization problem of the form
\begin{align} 
\label{eq:model_opt}
\min_{\bx \in \cX(t), \by = \bh_t(\bx) } f_t(\by)
\end{align}
where $\cX(t)$ is a convex set representing, e.g., engineering constraints; and $f_t:  \mathbb{R}^{m}  \rightarrow \mathbb{R}$ is a convex function representing the time-varying performance goals.  For simplicity of illustration, we assume that the performance is measured only with respect to the output $\by$.

In an ideal situation, when infinite computation and communication capabilities are available, one could seek solutions (or stationary points) of \eqref{eq:model_opt} at each time $t$ by, e.g., an iterative projected-gradient method:
\begin{align}
\bx^{(k+1)} = & \proj_{\cX(t)}\Big\{ \bx^{(k)} - \alpha (\bJ_t^{(k)})^\sfT \nabla_{\by} f_t (h_t(\bx^{(k)}))  \Big\}, \label{eq:offline}
\end{align}
where $k = 0, 1, \ldots$ is the iteration index; $\bJ_t^{(k)} := \bJ_{\bh_t} (\bx^{(k)})$ is the Jacobian matrix of the input-output map $\bh_t$; $\proj_{\cX}(\bz) := \arg \min_{\bx \in \cX} \|\bz - \bx\|_2$ denotes projection; and $\alpha > 0$ is the step size. In practice, this approach might be infeasible. First, running \eqref{eq:offline} to convergence might be impossible due to stringent real-time constraints. Second, \eqref{eq:offline} requires the functional knowledge of the input-output map $\bh_t$ or an approximation thereof in real time. The latter is absent in typical network optimization applications, such as the power distribution system example considered here.

To address the first challenge, we adopt the measurement-based optimization framework of \cite{opfPursuit,Hauswirth2016,Hauswirth2018,bernstein2019,colombino2019}, wherein \eqref{eq:offline} is replaced with a running (or online) version:
\begin{align}
\bx^{(k+1)} = & \proj_{\cX^{(k)}}\Big\{ \bx^{(k)} - \alpha (\bJ^{(k)})^\sfT \nabla_{\by} f^{(k)}(\hat{\by}^{(k)})  \Big\}, \label{eq:online}
\end{align}
in which the iteration index $k$ is now identified with a discrete time instance $t_k$ at which the computation is performed; $\cX^{(k)} := \cX(t_k)$, $\bJ^{(k)} := \bJ_{\bh_{t_k}} (\bx^{(k)}) $, $f^{(k)} := f_{t_k}$; and $\hat{\by}^{(k)}$ represents the measurement of the network output $\bh_{t_k}(\bx^{(k)})$ at time $t_k$.  Note that \eqref{eq:online} represents a feedback controller that computes the input to the system based on its output; however, the implementation of this controller requires the real-time model information in the form of  $\bJ^{(k)}$. 

This paper sets out to address the second challenge, and tackle situations in which  $\bJ^{(k)}$ is not available for real time optimization. This is motivated by networked systems optimization problems, such as the optimal power flow in power systems considered in 
Section~\ref{sec:OPF}, wherein the topology and parameters of the network, as well as exogenous inputs, might change rapidly due to variability of working conditions, natural disasters, and cyber-physical attacks.   To address this challenge, we leverage a \emph{zero-order approximation} of the gradient of the objective function.  In particular, we replace the gradient of\footnote{A more direct approach is to estimate the Jacobian matrix with a zero-order approximation. This direction is a subject of ongoing research.} $F^{(k)} (\bx) := f^{(k)} (\bh_{t_k} (\bx))$
\begin{equation}
    \nabla F^{(k)} (\bx) = (\bJ_{\bh_{t_k}} (\bx))^{\sfT} \nabla_{\by} f^{(k)} (\bh_{t_k} (\bx))
\end{equation}
with the two function evaluation approximation:
\begin{equation} \label{eqn:proxy1}
    \hat{\nabla} F^{(k)} (\bx; \bfprobe, \epsy) \eqdef   \frac{1}{2\epsy} \bxi \left[ F^{(k)} (\bx + \epsy \bxi) - F^{(k)}(\bx - \epsy \bxi) \right]
\end{equation}
where $\bfprobe \in \reals^n$ is a perturbation (or exploration) vector,  and $\epsy > 0$ is a (small) scalar. Observe that this modification requires now measurements of the output at two (rather than one) inputs:  $\bx^{(k)} \pm \epsy \bfprobe$
at each time step $k$. With this modification, algorithm \eqref{eq:online}  can be carried out without any knowledge of the map $\bh_t$.

While the stylized algorithm \eqref{eq:online} and approximation \eqref{eqn:proxy1} is used here for an illustration of the main ideas, the paper considers a more general convex constrained optimization framework, and develops model-free \emph{primal-dual} methods to track the optimal trajectories. Using a deterministic exploration approach reminiscent to the quasi-stochastic approximation method \cite{berchecoldalmehmey19a}, this paper provides design principles for the exploration signal $\bfprobe$, as well as other algorithmic parameters, to ensure stability and tracking guarantees. In particular, we show that under some conditions, the iterates $\bx^{(k)}$ converge within a ball around the optimal solution of \eqref{eq:model_opt}.  
Finally, we apply the developed methodology to the problem of distributed real-time voltage regulation and power setpoint tracking in power distribution networks.

Much of the analysis is restricted to the case in which the model map $\bh_t$ is \emph{linear}.  Extensions to nonlinear models is a topic of current research.

\subsection*{Literature Review}
Zero-order (or gradient-free) optimization has been a subject of interest in the optimization, control, and machine learning communities for decades. The seminal paper of Kiefer and Wolfowitz  \cite{kiewol52} introduced a one-dimensional variant of  approximation \eqref{eqn:proxy1};  for a $d$-dimensional problem, it perturbs each dimension separately and requires $2d$ function evaluations.
The simultaneous perturbation stochastic approximation (SPSA) algorithm \cite{spa92}  uses zero-mean independent random perturbations,  
requiring two function evaluations at each step. 
In \cite{bhafumarwan03, prabhabhafumar19},  the SPSA algorithm was extended to \emph{deterministic perturbations}, to improve convergence rates 
under the assumption of a vanishing step-size \emph{and vanishing quasi-noise}.

Analysis of the zero-order methods based on two function evaluation (similar to \eqref{eqn:proxy1}) has been recently a focus of several papers \cite{duchi2015,Nesterov2017,Gao2018,hajinezhad2019}. 
In these papers,  a stochastic exploration signal $\bfprobe$ is used (typically, Gaussian iid) and convergence analysis of (projected) gradient methods and various variants of the method of multipliers is offered.

In the theoretical machine learning community, ``bandit optimization'' refers to zero-order online optimization algorithms based on a single or multiple evaluations of the objective function \cite{Flaxman2005,Awerbuch2008,Bubeck2012}. These algorithms typically have high variance of the estimate if the evaluation is performed using one or two function evaluations \cite{Chen2019}. Finally, the gradient-free technique termed  ``extremum-seeking control'' (ESC) \cite{arikrs03} leverages  sinusoidal perturbation signals and single function evaluation to estimate the gradient. Stability of the ESC feedback scheme was analyzed in, e.g., \cite{krswan00,wankrs00}. These algorithms typically require additional filters to smooth out the noise introduced by the perturbations.

In terms of zero-order network optimization, our work is closely related to the recent work \cite{hajinezhad2019}, wherein a multi-agent optimization of a class of non-convex problems is considered. The paper then develops distributed algorithms based on the zero-order approximation of the method of multipliers. In contrast to our paper, \cite{hajinezhad2019} considers a stochastic exploration signal  for the gradient estimation, and typically requires $N > 2$ function evaluations to reduce the estimation variance \cite[Lemma 1]{hajinezhad2019}; see also \cite{berchecoldalmehmey19a} for the detailed analysis of the advantage of deterministic vs stochastic exploration. Moreover, it considers a static optimization problem.

Contrary to much of the previous work (with the sole exception of perhaps \cite{hajinezhad2019}), we consider a constant step-size algorithm, necessitated by our application in a \emph{time-varying} optimization problem. Moreover, since the problem we consider here is a \emph{constrained} optimization problem, it leads to the extension of the gradient-free methods to novel model-free primal-dual algorithms. Due to the real-time nature of our application, we resort here to computationally-light online gradient-descent methods rather than variants of methods of multipliers which typically require solving an optimization problem at each time step.

\paragraph*{Organization}
The rest of the paper is organized as follows. Section \ref{sec:Network} introduces the general network optimization problem. Section \ref{sec:pri-dual} proposes the model-free primal-dual algorithm to solve the problem in Section \ref{sec:Network}. The convergence of the algorithm is analyzed in Section \ref{sec:analysis}. Section \ref{sec:OPF} introduces the power systems application and the simulation results are presented in Section \ref{sec:sim}. Section \ref{sec:conclusion} concludes the paper and proposes future works.

\section{Network Optimization Framework}
\label{sec:Network}
Consider the general network optimization problem  \cite{bernstein2019}, wherein at each time step $k \in \mathbb{N}$, the goal is to optimize the operation of a network of $N$ systems:
\begin{subequations} 
\label{eqn:sampledProblem}
\begin{align} 
 &\min_{\substack{\bx \in \reals^n}}\hspace{.2cm} f_0^{(k)}(\by^{(k)}(\bx)) + \sum_{i = 1}^N f_i^{(k)}(\bx_i) \label{eq:obj_p0} \\
&\mathrm{subject\,to:~} \bx_i \in \cX_i^{(k)} , \, i = 1, \ldots, N \label{eqn:constr_Xsampl} \\
& \hspace{1.8cm} g_j^{(k)}(\by^{(k)}(\bx))  \leq 0 , \, j = 1, \ldots, M \label{eq:ineqconst}
 \end{align}
\end{subequations}
where $\cX_i^{(k)} \subset \mathbb{R}^{n_i}$, with $\sum_{i = 1}^N n_i = n$, are convex sets representing operational constraints on the control input $\bx_i$ of system $i$; and $\by^{(k)}(\bx)$ is  an algebraic representation of some observable outputs in the networked system. In this paper, much of the analysis is restricted to the linear case\footnote{However, the developed algorithms, as well as our numerical results in Section \ref{sec:sim},  are \emph{not} restricted to the linear case.}
\begin{equation} \label{eq:sys}
\by^{(k)}(\bx) := \bC \bx + \bD \bw^{(k)} \in \reals^m    
\end{equation}
 where $\bC \in \mathbb{R}^{m \times n}$ and $\bD \in \mathbb{R}^{m \times w}$ are given model parameters, and $\bw^{(k)} \in \mathbb{R}^{w}$ is a vector of time-varying exogenous (uncontrollable) inputs. 
For example, in the power systems case considered in Section \ref{sec:sim}, $\by^{(k)}(\bx)$ represents the linearized power-flow equations. 

Further, the convex function $f_0^{(k)}(\by^{(k)}(\bx)): \mathbb{R}^n \rightarrow \mathbb{R}$ represent the time-varying performance goals associated with the outputs $\by^{(k)}(\bx)$; whereas 
 $f_i^{(k)}(\bx_i): \mathbb{R}^{n_i}  \rightarrow \mathbb{R}$ is a convex function representing performance goals of system $i$. Finally,  $g_j^{(k)}(\by^{(k)}(\bx)):  \mathbb{R}^{m}  \rightarrow \mathbb{R}$ are  convex functions used to impose time-varying constraints on the output $\by^{(k)}(\bx)$. 

Observe that \eqref{eqn:sampledProblem} is naturally \emph{model-based} -- in order to solve it, the knowledge of the (linearized) system model \eqref{eq:sys} and the exogenous input $\bw^{(k)}$ is required. This problem will be used next to define the \emph{desired operational trajectories} for the networked system; in Section \ref{sec:pri-dual}, we propose a \emph{model-free} algorithm to track these desired operational trajectories.

We define the following shorthand notation:
\[
\begin{aligned}
\bg^{(k)}(\by^{(k)}(\bx))  & := \left[g_1^{(k)}(\by^{(k)}(\bx)), \ldots, g_M^{(k)}(\by^{(k)}(\bx))\right]^\sfT
\\[.5em]
f^{(k)}(\bx) & := \sum_i f_i^{(k)}(\bx_i) 
\\
h^{(k)} (\bx) & := f^{(k)}(\bx) + f_0^{(k)}(\by^{(k)} (\bx)). 
\end{aligned} 
\]
Setting $\blambda \in \mathbb{R}_+^M$ as the vector of dual variables associated with~\eqref{eq:ineqconst},  the  Lagrangian function at step $k$ is given by: 
\begin{align}
\cL^{(k)}(\bx, \blambda) & :=h^{(k)} (\bx) + \blambda^\sfT\bg^{(k)}(\by^{(k)}(\bx)) \, . \label{eqn:lagrangian}
\end{align}
Finally, the \emph{regularized} Lagrangian function  is given by \cite{Koshal11}:
\begin{align} \label{eq:lagrangian_reg}
\cL^{(k)}_{p,d}(\bx, \blambda) := \cL^{(k)}(\bx, \blambda) + \frac{p}{2}\|\bx\|_2^2 - \frac{d}{2}\|\blambda\|_2^2 
\end{align}
where $p > 0$ and $d > 0$ are  regularization parameters. With this notation at hand, consider the saddle-point problem associated with $\cL^{(k)}_{p,d}$:
\begin{align} \label{eq:minmax}
\max_{{\footnotesize \blambda} \in \cD^{(k)}} \min_{\bx \in \cX^{(k)}} \cL^{(k)}_{p,d}(\bx, \blambda) \, \hspace{.5cm} k \in \mathbb{N}
\end{align}     
where $\cX^{(k)} := \cX_1^{(k)} \times \ldots \times \cX_N^{(k)}$ and $\cD^{(k)}$ is a convex and compact set that contains the optimal dual variable of \eqref{eq:minmax}; see, e.g.,  \cite{bernstein2019} for details. Hereafter, the optimal trajectory $\bz^{(*, k)} := \{\bx^{(*,k)}, \blambda^{(*,k)}\}_{k\in \mathbb{N}}$ of \eqref{eq:minmax} represents the \emph{desired trajectory}. 
It is clear that  $\cL^{(k)}_{p,d}(\bx, \blambda)$ is strongly convex in $\bx$ and strongly concave in $\blambda$; thus, the  optimizer $\bz^{(*, k)}$ of~\eqref{eq:minmax}  is unique. 
However, $\bz^{(*, k)}$ might be different from any saddle point of the exact Lagrangian $\cL^{(k)}(\bx, \blambda)$, and the bound on the distance between $\bz^{(*,k)}$ and the solution  of~\eqref{eqn:sampledProblem} can be established as, e.g., in \cite{Koshal11}.

To proceed, we list the assumptions that will be imposed on the different elements of \eqref{eqn:sampledProblem}.

\begin{assumption} 
\label{ass:constqualification}
Slater's condition holds at each time step $k$. 
\end{assumption}

\begin{assumption} 
\label{ass:cost}
The functions $f_0^{(k)}(\by)$ and $f^{(k)}_i(\bx_i)$ are convex and continuously differentiable. The  maps $\nabla f^{(k)}(\bx)$, $\nabla f_0^{(k)}(\by)$, and $\nabla^2 f_0^{(k)}(\by)$  are Lipschitz continuous.
\end{assumption}
\begin{assumption} 
\label{ass:nonlinearconstr}
For each $j = 1, \ldots, M$, the function $g_j^{(k)}(\by)$ is convex and continuously differentiable.  Moreover, $\nabla g_j^{(k)}(\by)$ and $\nabla^2 g_j^{(k)}(\by)$  are Lipschitz continuous. 
\end{assumption}

\begin{assumption} \label{ass:grad_var}
There exists a constant $e_f$ such that for all $k$ and $\bx, \by$:
\begin{align}
    \|\nabla f_0^{(k)}(\by) - \nabla f_0^{(k-1)}(\by)\| &\leq e_f, \nonumber \\
    \|\nabla f^{(k)}(\bx) - \nabla f^{(k-1)}(\bx)\|  &\leq e_f,   \nonumber \\
    \| \nabla g_j^{(k)}(\by) - \nabla g_j^{(k-1)}(\by)\| &\leq e_f, \quad j = 1, \ldots, M. \nonumber
\end{align}
\end{assumption}

\section{Model-Free Primal-Dual Method}
\label{sec:pri-dual}

A measurement-based primal-dual method was introduced in \cite{bernstein2019}  to track the desired trajectory $\{\bz^{(*, k)}\}$; we present the method here for convenience:
\begin{subequations} 
\label{eq:primalDualFeedback}
\begin{align}
\bx^{(k+1)} & = \proj_{\cX^{(k)}}\Big\{(1 - \alpha p) \bx^{(k)} - \alpha \big(\nabla_\bx f^{(k)}(\bx^{(k)}) \nonumber \\  
& \hspace{-.7cm} + \bC^\sfT\nabla_\by f_0^{(k)}(\hat{\by}^{(k)} ) 
 + \sum_{j = 1}^M \lambda_j^{(k)}\bC^\sfT\nabla g_j^{(k)}(\hat{\by}^{(k)} )
\big)\Big\}  \label{eq:primalstep} \\
\blambda^{(k+1)} & = \proj_{\cD^{(k)}}\Big\{(1 - \alpha d) \blambda^{(k)} + \alpha \bg^{(k)}(\hat{\by}^{(k)} ) \Big\} \label{eq:dualstep}
\end{align}
\end{subequations}
where $\alpha > 0$ is a constant step size, and $\hat{\by}^{(k)}$ is a (possibly noisy) measurement of $\by^{(k)}(\bx^{(k)})$ collected at time step $k$ (see Assumption \ref{ass:error} below).
 The main advantage of \eqref{eq:primalDualFeedback} is that it avoids explicit computation of \eqref{eq:sys} at each time step, and thus does not require knowledge or estimation of the uncontrollable input $\bw^{(k)}$ appearing in \eqref{eq:sys}. Note  that since the method is gradient-based, it relies on the availability of the network model matrix $\bC$. However, in many practical applications, including the power system example of Section \ref{sec:sim}, $\bC$ is time-varying (e.g., due to network topology variation) and its exact value is unavailable.

We next propose a model-free variant of \eqref{eq:primalDualFeedback}. 
As in the celebrated gradient-free stochastic approximation algorithm of  Kiefer and Wolfowitz \cite{kiewol52}, 
the general idea is to use few (online) evaluations of the objective function to estimate the gradient.     
The gradient approximation is justified through a Taylor series expansion as explained next.

Let  $F \colon \reals^n \rightarrow \reals$  be  a $C^3$ function,   whose first and second derivatives are globally Lipschitz continuous (cf.~Assumption \ref{ass:cost}).    For given $\bx\in\reals^n$,  this variable is  perturbed by $\pm\epsy \bfprobe$, with   $\bfprobe\in\reals^n$ and $\epsy > 0$. 
 According to Taylor's theorem,    
 for any $\bx\in\reals^n$ and $r\in \reals$, 
\[
F(\bx +r \bfprobe)  
= F(\bx) + r \bfprobe^{\sfT} \nabla F(\bx) + \frac{r^2}{2} \bfprobe^\sfT \nabla^2 F(\bx) \bfprobe 
+ O(r^3)
\]
Taking $r=\epsy$,  $r=-\epsy$, and then subtracting  yields:
 \begin{equation}
    F(\bx + \epsy \bfprobe) - F(\bx - \epsy \bfprobe) = 2 \epsy \bfprobe^{\sfT} \nabla F(\bx) + O(\epsy^3)  
 \end{equation}
 
 This approximation is summarized in Lemma~\ref{lem:twoPoint} that follows,  with the following shorthand notation:
\begin{equation} 
    \hat{\nabla} F (\bx; \bfprobe, \epsy) :=   \frac{1}{2\epsy} \bxi \left[ F(\bx + \epsy \bxi) - F(\bx - \epsy \bxi) \right].
\end{equation}
\begin{lemma} 
\label{lem:twoPoint}
Let $F: \reals^n \rightarrow \reals$ be a $C^3$   function,   with Lipschitz continuous $\nabla F$ and $\nabla^2 F$. 
Then,
\begin{equation}
\label{eqn:proxy}
    \hat{\nabla} F (\bx; \bfprobe, \epsy)  =   \bxi \bxi^{\sfT} \, \nabla F(\bx) + O(\epsy^2) 
\end{equation} 
\end{lemma}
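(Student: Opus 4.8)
The plan is to prove the lemma by a \emph{symmetric} second-order Taylor expansion of $F$ about $\bx$ in the directions $\pm\epsy\bfprobe$, exploiting the fact that the even-order terms cancel in the difference $F(\bx+\epsy\bfprobe) - F(\bx-\epsy\bfprobe)$ while the odd-order terms double. First I would invoke Taylor's theorem with the Lagrange form of the second-order remainder: there exist a point $\bz_+$ on the segment $[\bx,\bx+\epsy\bfprobe]$ and a point $\bz_-$ on $[\bx,\bx-\epsy\bfprobe]$ such that $F(\bx\pm\epsy\bfprobe) = F(\bx)\pm\epsy\,\bfprobe^{\sfT}\nabla F(\bx) + \tfrac{\epsy^2}{2}\,\bfprobe^{\sfT}\nabla^2 F(\bz_\pm)\bfprobe$. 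Subtracting, the zeroth-order terms cancel and the second-order terms combine, leaving $F(\bx+\epsy\bfprobe) - F(\bx-\epsy\bfprobe) = 2\epsy\,\bfprobe^{\sfT}\nabla F(\bx) + \tfrac{\epsy^2}{2}\,\bfprobe^{\sfT}\!\left[\nabla^2 F(\bz_+) - \nabla^2 F(\bz_-)\right]\!\bfprobe$.

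Next I would bound the remainder. Since $\bz_+$ and $\bz_-$ both lie within distance $\epsy\|\bfprobe\|$ of $\bx$, we have $\|\bz_+ - \bz_-\| \le 2\epsy\|\bfprobe\|$, so by the global Lipschitz continuity of $\nabla^2 F$ (with some constant $L_2$) one gets $\|\nabla^2 F(\bz_+) - \nabla^2 F(\bz_-)\| \le 2L_2\epsy\|\bfprobe\|$, hence $\bigl|\bfprobe^{\sfT}[\nabla^2 F(\bz_+) - \nabla^2 F(\bz_-)]\bfprobe\bigr| \le 2L_2\epsy\|\bfprobe\|^3$. Therefore $F(\bx+\epsy\bfprobe) - F(\bx-\epsy\bfprobe) = 2\epsy\,\bfprobe^{\sfT}\nabla F(\bx) + \rho$, where $|\rho| \le L_2\epsy^3\|\bfprobe\|^3$, i.e. an $O(\epsy^3)$ term whose implicit constant depends only on $L_2$ and $\|\bfprobe\|$. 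Multiplying through by $\tfrac{1}{2\epsy}\bfprobe$ yields $\hat{\nabla}F(\bx;\bfprobe,\epsy) = \bfprobe\bfprobe^{\sfT}\nabla F(\bx) + \tfrac{1}{2\epsy}\rho\,\bfprobe$, and the error term is bounded in norm by $\tfrac{1}{2}L_2\epsy^2\|\bfprobe\|^4$, which is exactly the claimed $O(\epsy^2)$ in \eqref{eqn:proxy}.

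The step I expect to require the most care is making precise in what sense the error is $O(\epsy^2)$ — specifically, that the hidden constant is uniform in $\bx$ and polynomial in $\|\bfprobe\|$, since this uniformity is what the downstream tracking analysis implicitly relies on. This is precisely where \emph{global} (rather than merely local) Lipschitz continuity of $\nabla^2 F$ enters; the stated $C^3$ hypothesis is in fact slightly stronger than needed, and one could alternatively write the remainder in integral form in terms of the third derivative, but the Lagrange-plus-Lipschitz route above avoids assuming any bound on $\nabla^3 F$. A minor point worth stating explicitly in the write-up is that the first-order term survives \emph{exactly} (with no residual $\epsy$-dependence) only because of the antisymmetric two-evaluation construction; this cancellation of the $O(\epsy)$ contribution is the whole reason the estimator has bias $O(\epsy^2)$ rather than $O(\epsy)$.
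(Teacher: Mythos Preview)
Your proof is correct and follows essentially the same route as the paper: a symmetric second-order Taylor expansion about $\bx$ in the directions $\pm\epsy\bfprobe$, subtracting so that the even-order terms cancel and the remainder is $O(\epsy^3)$, then dividing by $2\epsy$ and multiplying by $\bfprobe$. Your handling of the remainder via the Lagrange form at intermediate points $\bz_\pm$ together with the Lipschitz bound on $\nabla^2 F$ is in fact more explicit than the paper's one-line appeal to an $O(r^3)$ Taylor remainder, and your remark that this makes the hidden constant uniform in $\bx$ (and avoids needing a bound on $\nabla^3 F$) is exactly the right justification for why the $O(\epsy^2)$ is uniform.
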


Note that $O(\epsy^2) = \mathbf{0}$ if $F$ is a quadratic function.

In applying Lemma~\ref{lem:twoPoint} to algorithm design,  the   vector $\bfprobe$ is   updated at each iteration of the algorithm.      
Let $\{\bfprobe^{(k)}\}$ denote a deterministic \emph{exploration signal}, with $\bfprobe^{(k)} \in \reals^n$.  
It is assumed in this paper that the exploration signal is obtained  by sampling  the sinusoidal signal
\begin{equation} \label{eq:probe}
\probe_i(t) = \sqrt{2}\sin (\omega_i t), \quad i = 1, \ldots, n,   
\end{equation}
with $\omega_i \neq \omega_j$ for all $i \neq j$; 
 other signals are possible provided that they satisfy Assumption \ref{a:pertb} below.

The following gradient-free variant of the primal-dual algorithm \eqref{eq:primalDualFeedback} is well  motivated by 
Lemma~\ref{lem:twoPoint}.

\noindent\makebox[\linewidth]{\rule{\columnwidth}{0.4pt}}
\textbf{Model-Free Primal-Dual Algorithm} \\
\noindent\makebox[\linewidth]{\rule{\columnwidth}{0.4pt}}

\noindent At each time step $k$, perform the following steps:

\noindent  \textbf{[S1a] (forward exploration)}: Apply $\bx^{(k)}_+ := \bx^{(k)} + \epsy \bfprobe^{(k)}$ to the system, and collect the  measurement $\hat{\by}^{(k)}_+$ of the output $\by^{(k)}(\bx^{(k)}_+)$.

\noindent  \textbf{[S1b] (backward exploration)}: Apply $\bx^{(k)}_- := \bx^{(k)} - \epsy \bfprobe^{(k)}$ to the system, and collect the  measurement $\hat{\by}^{(k)}_-$ of the output $\by^{(k)}(\bx^{(k)}_-)$.

\noindent  \textbf{[S1c] (control application)}: Apply $\bx^{(k)}$ to the system, and collect the  measurement $\hat{\by}^{(k)}$ of the output $\by^{(k)}(\bx^{(k)})$.

\noindent  \textbf{[S2a] (approximate gradient)}: Compute the approximate gradient for the primal step
\begin{align}
    \hat{\nabla} \cL^{(k)} &:= \nabla_\bx f^{(k)}(\bx^{(k)}) \nonumber\\
    &\quad + \frac{1}{2 \epsy}\bfprobe^{(k)}\left[f_0^{(k)}(\hat{\by}^{(k)}_+ ) -f_0^{(k)}(\hat{\by}^{(k)}_- )  \right]\nonumber\\
    &\quad + \frac{1}{2 \epsy} \bfprobe^{(k)}(\blambda^{(k)})^\sfT \left[\bg^{(k)}(\hat{\by}^{(k)}_+ ) - \bg^{(k)}(\hat{\by}^{(k)}_- )  \right].
\end{align}

\noindent  \textbf{[S2b] (approximate primal step)}: Compute
\begin{equation} \label{eqn:primal}
\bx^{(k+1)} = \proj_{\cX^{(k)}}\left\{(1 - \alpha p) \bx^{(k)} - \alpha \hat{\nabla} \cL^{(k)} \right\}.
\end{equation}

\noindent  \textbf{[S3] (dual step)}: Compute
\begin{equation} \label{eqn:dual}
    \blambda^{(k+1)}  = \proj_{\cD^{(k)}}\left\{(1 - \alpha d) \blambda^{(k)} + \alpha \bg^{(k)}(\hat{\by}^{(k)} ) \right\}.
\end{equation}

\noindent\makebox[\linewidth]{\rule{\columnwidth}{0.4pt}}

The following remarks are in order. 

\begin{remark}
  Lemma~\ref{lem:twoPoint} alone is not enough to obtain convergence guarantees since the matrix $\bxi \bxi^{\sfT}$ is  semi-definite but not strictly definite. We formulate conditions for convergence in the next section.
\end{remark}

\begin{remark}
The algorithm allows distributed implementation.     This point is illustrated in a  power system optimization example described in Section~\ref{sec:sim}.
\end{remark}

\begin{remark}
The algorithm requires three measurements of the output: $\hat{\by}^{(k)}_+, \hat{\by}^{(k)}_-,$ and $\hat{\by}^{(k)}$. In fact, the third measurement can be replaced with, e.g., an average $\hat{\by}^{(k)} := 0.5(\hat{\by}^{(k)}_+  + \hat{\by}^{(k)}_-)$, provided that $\hat{\by}^{(k)}$ satisfies Assumption \ref{ass:error} below.
\end{remark}

\section{Convergence Analysis} \label{sec:analysis}
For the purpose of the analysis, we assume bounded measurement error.

\begin{assumption} \label{ass:error}
There exists a scalar $e_y < \infty$ such that the measurement error can be bounded as  
\begin{align*}
\sup_{k \geq 1} \|\hat{\by}^{(k)} - \by^{(k)}(\bx^{(k)})\|_2 & \leq e_y, \nonumber \\
\sup_{k \geq 1} \|\hat{\by}_+^{(k)} - \by^{(k)}(\bx_+^{(k)})\|_2 & \leq e_y, \\
\sup_{k \geq 1} \|\hat{\by}_-^{(k)} - \by^{(k)}(\bx_-^{(k)})\|_2 & \leq e_y. \nonumber
\end{align*}
\end{assumption}

The exploration signal plays an important role in the convergence of the proposed algorithm. Recall that the exploration signal $\bfprobe^{(k)}$ is sampled from a continuous-time signal $\bfprobe(t)$. 
Recent work on quasi-stochastic approximation \cite{shimey2011,berchecoldalmehmey19a} and  extremum seeking control \cite{ariyur2003real} has shown that certain deterministic exploration signals can help reduce the asymptotic variance and improve convergence, in comparison to random exploration (see Fig.~1 of \cite{shimey2011}). We impose the following assumption on the exploration signal.

\begin{assumption}
\label{a:pertb}
The exploration signal is a periodic signal with period $T$, and 
\begin{eqnarray} 
\frac{1}{T} \int_{t }^{t+T} \bfprobe(\tau) \bfprobe(\tau)^\sfT \, d\tau &=& \bI
\label{eq:unit_cov}
\end{eqnarray}
for all $t$, where $\bI$ is the identity matrix.
\end{assumption}
It can be shown that the signal defined in \eqref{eq:probe} satisfies Assumption \ref{a:pertb} with $T$ being a common integer multiple of the sinusoidal signal periods.

This strong assumption is imposed to simplify the proofs that follows.  It is enough to have the approximation:
\[
\frac{1}{T} \int_{t }^{t+T} \bfprobe(\tau) \bfprobe(\tau)^\sfT \, d\tau  =  \bSigma_\xi   +  O(1/T)
\]
in which $ \bSigma_\xi$ is positive definite,    and the $1/T$ bound for the error term is independent of $t$ (this is the assumption used in \cite{berchecoldalmehmey19a}). 
An example is the signal defined in \eqref{eq:probe}  in which the frequencies may not have a common multiple.   

We first show that the  primal step \eqref{eqn:primal} is approximately equivalent to an averaged primal step, where the singular ``gain matrix'' $\bfprobe \bfprobe^{\sfT}$ is replaced with the identity matrix \eqref{eq:unit_cov}. This analysis  is performed for the algorithm defined in continuous time,  which is justified using standard ODE approximation techniques from the stochastic approximation literature \cite{bor08a,berchecoldalmehmey19a} or more recent literature on optimization \cite{zhamoksrajad18,shidusujor19}.
We then apply the results of \cite{bernstein2019} to the resulting approximate primal-dual algorithm to show tracking of the desired trajectory $\{\bz^{(*, k)}\}$ defined by \eqref{eq:minmax}.

\subsection{Averaged Primal Step}
In this section, we provide some preliminary results on the asymptotic properties of the primal iteration \eqref{eqn:primal}; a detailed analysis is the subject of ongoing work. 

To that end, consider a $C^3$ strongly convex strongly smooth function $F$ and the associated projected gradient descent method:
\begin{equation}
    \bx^{(k+1)} = \proj_{\cX} \left \{\bx^{(k)} - \alpha \nabla F(\bx^{(k)}) \right \}.
\end{equation}
The corresponding gradient-free method reads
\begin{equation} \label{eqn:gf}
    \bx^{(k+1)} = \proj_{\cX} \left \{\bx^{(k)} - \alpha \hat{\nabla} F(\bx^{(k)}; \bfprobe^{(k)}, \epsy)\right \}.
\end{equation}
Our goal in this section is to explain conditions for convergence of \eqref{eqn:gf}. 

We cannot expect convergence of $\{\bx^{(k)} \}$ to a limit as $k\to\infty$.     We might expect something like the ergodic steady-state obtained for stochastic approximation with fixed stepsize~\cite{bor08a}.   In this section, we are content to simply obtain bounds on the error $\| \bx^{(k)} - \bx^* \|$,   where $\bx^*$ is the minimizer of $F$.
Under appropriate assumptions, for a continuous time model we obtain
\begin{equation}
\limsup_{t\to\infty}  \| \bx(t) -\bx^* \| = O(\alpha+ \epsy^2)
\label{e:ultBdd}
\end{equation}
See Remark~\ref{r:Jbdd} at the close of this section.

According to Lemma~\ref{lem:twoPoint}, the continuous-time analogue of \eqref{eqn:gf} is given by
\begin{equation} 
\label{eqn:gf_cont_proj}
    \dot{\bx}(t) = \proj_{\cT_{\cX}(\bx(t))} \left \{     \alpha  \bbeta(t, \bx(t)) \right\}
\end{equation} 
where,   $\cT_{\cX}(\bx)$ is the cone of tangent directions of $\cX$ at $\bx$ (or for short, the tangent cone of $\cX$ at $\bx$), and
  for any $t$ and $ \bx$, 
\begin{equation}
    \bbeta(t, \bx) :=   - \bxi(t) \bxi(t)^{\sfT} \, \nabla F(\bx) + O(\epsy^2).
\end{equation}
The function $\bbeta$ is Lipschitz continuous in $\bx$, uniformly in $t$.   


Analysis of the continuous projection in \eqref{eqn:gf_cont_proj} is a topic of research.   We consider here a simplification in which the projection is only applied at integer multiples of the period $T$.   At stage $K+1$ of the algorithm, we have computed $ \bx(KT)$,  and   with this initial condition, $\{ \bx(t) :  KT\le t  < (K+1) T \}$ is defined as the solution to the ODE without projection:
 \begin{equation} 
    \dot{\bx}(t)   =     \alpha  \bbeta(t, \bx(t))   
\label{eqn:gf_cont} 
\end{equation}
We then define $   \bx((K+1)T)           = $
 \begin{equation} 
 \proj_K \Bigl\{  \bx(KT) +   \alpha \int_{KT}^{(K+1)T}    \bbeta(\tau , \bx(\tau))\,  d\tau   \Bigr\}
\label{eqn:gf_cont_BigStep} 
\end{equation}
where the projection is onto the set of states corresponding to constraints on   $ \bx(KT)$.  

This recursion admits an approximation by a projected gradient descent algorithm:

\begin{lemma} \label{lem:equiv_dyn}
Suppose $F$ is $C^3$,  and that both $\nabla F$ and $\nabla^2 F$ are globally Lipschitz continuous.   Suppose moreover that   
Assumption~\ref{a:pertb} holds.   
%
%
Then,  the solutions to  \eqref{eqn:gf_cont}  admit the approximation:
\begin{equation}
   \bx((K+1)T)    =\proj_K \left \{\bx  - \alpha     T (\nabla F(\bx) +   s_K(\bx))   \right \}
\end{equation}
where    $\bx=\bx(KT)$,   and $s_K(\bx) = O(  \alpha+  \epsy^2 )$.
\end{lemma}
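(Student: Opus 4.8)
The plan is to analyze the unprojected ODE \eqref{eqn:gf_cont} over one period $[KT,(K+1)T]$ and show that the integral $\int_{KT}^{(K+1)T}\bbeta(\tau,\bx(\tau))\,d\tau$ is well-approximated by $-T\bigl(\nabla F(\bx)+O(\alpha+\epsy^2)\bigr)$ with $\bx=\bx(KT)$, after which substitution into \eqref{eqn:gf_cont_BigStep} gives the claim. First I would establish a short-horizon perturbation estimate: since $\dot{\bx}(t)=\alpha\bbeta(t,\bx(t))$ with $\bbeta$ bounded (it is Lipschitz in $\bx$, uniformly in $t$, and $\nabla F$ has at most linear growth on the relevant region), Gr\"onwall's inequality yields $\|\bx(\tau)-\bx\|\le C\alpha T$ for all $\tau\in[KT,(K+1)T]$, where $C$ absorbs the Lipschitz constant of $\bbeta$ and the local bound on $\|\nabla F\|$. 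This is the key quantitative input: it says the trajectory barely moves over one period when $\alpha$ is small.

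Next I would freeze the state at its initial value inside the integrand and control the error. Write
\[
\int_{KT}^{(K+1)T}\!\!\bbeta(\tau,\bx(\tau))\,d\tau
=\int_{KT}^{(K+1)T}\!\!\bbeta(\tau,\bx)\,d\tau
+\int_{KT}^{(K+1)T}\!\!\bigl[\bbeta(\tau,\bx(\tau))-\bbeta(\tau,\bx)\bigr]d\tau.
\]
The second term is bounded by $T\cdot L_\beta\cdot\sup_\tau\|\bx(\tau)-\bx\|=O(\alpha T^2)$ using the uniform Lipschitz bound on $\bbeta$ and the Gr\"onwall estimate. For the first term, substitute $\bbeta(\tau,\bx)=-\bfprobe(\tau)\bfprobe(\tau)^\sfT\nabla F(\bx)+O(\epsy^2)$ and invoke Assumption~\ref{a:pertb}: since $\bx$ is constant in $\tau$ and $\frac{1}{T}\int_{KT}^{(K+1)T}\bfprobe(\tau)\bfprobe(\tau)^\sfT\,d\tau=\bI$, this integral equals exactly $-T\nabla F(\bx)+O(T\epsy^2)$. (If one instead uses the weaker $O(1/T)$ version of the averaging assumption mentioned after Assumption~\ref{a:pertb}, the same step produces $-T\bSigma_\xi\nabla F(\bx)+O(1)$, which still fits the conclusion after renaming constants.) Collecting terms,
\[
\int_{KT}^{(K+1)T}\bbeta(\tau,\bx(\tau))\,d\tau=-T\nabla F(\bx)+O(\alpha T^2)+O(T\epsy^2),
\]
so $\alpha$ times this is $-\alpha T\bigl(\nabla F(\bx)+s_K(\bx)\bigr)$ with $s_K(\bx)=O(\alpha T)+O(\epsy^2)=O(\alpha+\epsy^2)$ (treating $T$ as a fixed constant), and plugging into \eqref{eqn:gf_cont_BigStep} is exactly the asserted identity since $\proj_K$ there is the same projection as in the statement.

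I expect the main obstacle to be making the boundedness of $\bbeta(\tau,\bx(\tau))$ along the trajectory fully rigorous so that Gr\"onwall applies on a fixed region: a priori $\bx(\tau)$ could wander if $\nabla F$ were unbounded, so one needs to argue that on the compact set $\cX$ (or, in the unconstrained surrogate problem of this subsection, on a sublevel set of $F$, which is bounded by strong convexity) the trajectory stays in a fixed neighborhood, giving a uniform bound on $\|\nabla F\|$ and hence on $\bbeta$. A clean way is to note strong convexity of $F$ makes $\|\bx(t)-\bx^*\|$ non-increasing up to $O(\epsy^2)$ perturbations along the averaged flow, which both closes this gap and foreshadows the ultimate bound \eqref{e:ultBdd} referenced in Remark~\ref{r:Jbdd}. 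A secondary, purely bookkeeping point is tracking that the $O(\epsy^2)$ term from Lemma~\ref{lem:twoPoint} is uniform in $\tau$ and $\bx$ over the region of interest, which follows from the global Lipschitz hypotheses on $\nabla F$ and $\nabla^2 F$ assumed in the lemma.
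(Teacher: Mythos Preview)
Your proposal is correct and follows essentially the same route as the paper: bound the excursion $\|\bx(\tau)-\bx(KT)\|=O(\alpha T)$ on $[KT,(K+1)T]$, use the uniform Lipschitz property of $\bbeta$ to freeze the state inside the integrand, and then apply Assumption~\ref{a:pertb} to collapse $\int \bfprobe\bfprobe^\sfT$ to $T\bI$. Your discussion of the boundedness-along-trajectories subtlety is more careful than the paper's, which simply asserts the $b_0 T\alpha$ bound without comment.
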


\begin{proof}
It is enough to approximate the integral appearing in \eqref{eqn:gf_cont_BigStep}.   Under the Lipschitz conditions and boundedness of $\bfprobe$,  
there exists a constant $b_0<\infty$ such that for all $KT\le \tau  < (K+1) T$, 
\[
\begin{aligned}
\| \bx(\tau) - \bx(KT) \|  &  \le b_0 T\alpha, 
   \\
\| \bbeta(\tau, \bx(\tau) ) -   \bbeta(\tau, \bx(KT) ) \|  &  \le b_0 T\alpha. 
\end{aligned} 
\]
Consequently,  for   $\tau$ in this range,
\[
\begin{aligned}
 \bbeta(\tau,\bx(\tau))  &=    \bbeta(\tau, \bx(KT) )  + O(\alpha )
   \\
			 &=    
			  - \bxi(\tau) \bxi(\tau)^{\sfT} \, \nabla F(\bx(KT)) + O(\epsy^2)   + O(\alpha )
\end{aligned} 
\] 
Under Assumption \ref{a:pertb}, it follows that
\[
 \int_{KT}^{(K+1)T}\bbeta (\tau, \bx(\tau)) d\tau = -  T \left[\nabla F(\bx(KT)) + O(\alpha+ \epsy^2) \right] \,,
\]
which completes the proof.
\end{proof}

\begin{remark}
\label{r:Jbdd}
If $\| \nabla F \|$ is coercive,  then techniques used to establish stability of gradient descent can be used to show that the solution to \eqref{eqn:gf_cont}  is ultimately bounded \cite{nes14}.  If $F$ is $L$-smooth and $\mu$-strongly convex then we   obtain the uniform bound \eqref{e:ultBdd}, independent of the initial condition.
\end{remark}

\subsection{Tracking of the Desired Trajectory}

We note that Lemma~\ref{lem:equiv_dyn} suggests that the primal iteration \eqref{eqn:primal} can be viewed as an approximate projected gradient descent iteration on the primal variable provided that: (i) $\alpha$ and $ \epsilon$ are chosen small enough, and (ii) the projection is performed periodically, every $T$ time instances.
In that case, iteration \eqref{eqn:primal}-\eqref{eqn:dual} is  equivalent to:
\begin{subequations} \label{eqn:equiv_iter}
\begin{align} 
\bx^{(k+1)} &= \proj_k \Big\{ \bx^{(k)} - \alpha \Big(\nabla_{\bx}  \cL_{p,d}^{(k)}(\bz^{(k)})  \nonumber\\
&\qquad \qquad \qquad \quad + O(\alpha + \epsilon^2 + e_f + e_y) \Big) \Big\} \\
\blambda^{(k+1)}  &= \proj_{\cD^{(k)}}\left\{(1 - \alpha d) \blambda^{(k)} + \alpha \bg^{(k)}(\hat{\by}^{(k)} ) \right\},
 \end{align}
\end{subequations}
where where $\bz^{(k)} := \{\bx^{(k)}, \blambda^{(k)}\}$, and $\proj_k$ is the projection  onto $\cX^{(k)}$ for $k$ a multiple of $T/\Delta t$ ($\Delta t$ is the time discretization step).
We note that the additional term $O(e_y)$ in \eqref{eqn:equiv_iter} is due to the use of measurements in \eqref{eqn:primal} instead of the exact system output (cf.~Assumption \ref{ass:error}). Finally, the additional term $O(e_f)$ is due to the time-variability of the Lagrangian over the period of $T$ time instances as quantified by Assumption \ref{ass:grad_var}. 

We next analyze the tracking properties of \eqref{eqn:equiv_iter}. To this end,  quantify the temporal variability of the desired trajectory $\bz^{(*, k)}$ via  
\begin{equation} \label{eqn:sigma2}
\sigma^{(k)} := \|\bz^{(*, k+1)} - \bz^{(*, k)}\|_2 \, .
\end{equation}
In addition, let
\begin{equation}
\label{phi_mapping_true}
\bphi^{(k)}: \bz \mapsto 
 \left[\begin{array}{c}
 \nabla_{\bx}  \cL_{p,d}^{(k)}(\bz) \\
 - \bg^{(k)}(\by^{(k)}(\bx)) + d \blambda 
\end{array}
\right]
\end{equation}
denote the primal-dual operator associated with \eqref{eq:minmax}. The following holds (see, e.g., \cite[Lemma 5]{bernstein2019}).
\begin{lemma}
\label{lem:Phimonotone} There exist constants $0 < \eta_{\phi} \leq L_\phi < \infty$ such that, 
for every $k \in \mathbb{N}$, the map $\bphi^{(k)}(\bz)$ is strongly monotone over $\cX^{(k)} \times \cD^{(k)}$ with constant $\eta_{\phi}$ and Lipschitz over $\cX^{(k)} \times \cD^{(k)}$ with coefficient $L_\phi$.
\end{lemma}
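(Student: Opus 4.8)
The plan is to recognize $\bphi^{(k)}$ as the saddle-point operator of the strongly-convex/strongly-concave function $\cL_{p,d}^{(k)}$ and to read off both claims from that structure; this is essentially \cite[Lemma 5]{bernstein2019}, which I would invoke, sketching the argument for self-containedness. Write $\bphi^{(k)} = \bphi_0^{(k)} + \bP$, where $\bP$ is the block-diagonal matrix with blocks $p\bI$ and $d\bI$, and $\bphi_0^{(k)}(\bz) := \big(\nabla_\bx \cL^{(k)}(\bx,\blambda),\, -\bg^{(k)}(\by^{(k)}(\bx))\big)$ is the saddle operator of the \emph{unregularized} Lagrangian $\cL^{(k)}$ (using $\nabla_\blambda \cL^{(k)}(\bx,\blambda) = \bg^{(k)}(\by^{(k)}(\bx))$). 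Then $\bphi^{(k)}(\bz) - \bphi^{(k)}(\bz') = \bphi_0^{(k)}(\bz) - \bphi_0^{(k)}(\bz') + \bP(\bz - \bz')$, so it suffices to treat $\bphi_0^{(k)}$ and $\bP$ separately.

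For monotonicity of $\bphi_0^{(k)}$: since $\cD^{(k)}\subseteq\reals_+^M$, for each fixed $\blambda \in \cD^{(k)}$ the map $\bx \mapsto \cL^{(k)}(\bx,\blambda) = h^{(k)}(\bx) + \sum_j \lambda_j g_j^{(k)}(\by^{(k)}(\bx))$ is convex on $\cX^{(k)}$ (a nonnegative combination of the convex $h^{(k)}$ and $g_j^{(k)}\circ\by^{(k)}$, by Assumptions~\ref{ass:cost}--\ref{ass:nonlinearconstr} and affinity of $\by^{(k)}$), while $\blambda \mapsto \cL^{(k)}(\bx,\blambda)$ is affine, hence concave. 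I would then use the standard fact that the saddle operator of a differentiable convex--concave function is monotone: write the first-order convexity/concavity inequalities of $\cL^{(k)}$ at the four points $(\bx,\blambda),(\bx',\blambda),(\bx,\blambda'),(\bx',\blambda')$ and sum them; the ``mixed'' terms cancel and one is left with $\langle \bphi_0^{(k)}(\bz) - \bphi_0^{(k)}(\bz'),\, \bz - \bz'\rangle \ge 0$ on $\cX^{(k)}\times\cD^{(k)}$. Adding $\langle \bP(\bz-\bz'),\,\bz-\bz'\rangle = p\|\bx-\bx'\|_2^2 + d\|\blambda-\blambda'\|_2^2 \ge \min\{p,d\}\,\|\bz-\bz'\|_2^2$ then yields strong monotonicity with $\eta_\phi = \min\{p,d\} > 0$, uniform in $k$.

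For the Lipschitz bound I would estimate the two blocks of $\bphi^{(k)}$. The primal block equals $\nabla f^{(k)}(\bx) + \bC^\sfT\nabla f_0^{(k)}(\by^{(k)}(\bx)) + \sum_j \lambda_j\bC^\sfT\nabla g_j^{(k)}(\by^{(k)}(\bx)) + p\bx$: all terms but the coupling sum are Lipschitz in $\bx$ by Assumption~\ref{ass:cost} and affinity of $\by^{(k)}$; each coupling term $\lambda_j\bC^\sfT\nabla g_j^{(k)}(\by^{(k)}(\bx))$ is jointly Lipschitz in $(\bx,\blambda)$ over $\cX^{(k)}\times\cD^{(k)}$ because $\lambda_j$ is bounded on the compact $\cD^{(k)}$ (so the map is Lipschitz in $\bx$ with bounded constant, using Assumption~\ref{ass:nonlinearconstr} and boundedness of $\nabla g_j^{(k)}(\by^{(k)}(\cdot))$ on the bounded $\cX^{(k)}$) and it is linear in $\blambda$ with bounded coefficient. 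The dual block $-\bg^{(k)}(\by^{(k)}(\bx)) + d\blambda$ is likewise Lipschitz in $\bx$ (bounded Jacobian of $\bg^{(k)}\circ\by^{(k)}$ on $\cX^{(k)}$) and in $\blambda$. Collecting constants gives a finite $L_\phi$, $k$-independent because the constants in Assumptions~\ref{ass:cost}--\ref{ass:nonlinearconstr} and the bounds on $\cX^{(k)},\cD^{(k)}$ are. Finally $\eta_\phi \le L_\phi$ is immediate from $\eta_\phi\|\bz-\bz'\|_2^2 \le \langle\bphi^{(k)}(\bz)-\bphi^{(k)}(\bz'),\,\bz-\bz'\rangle \le \|\bphi^{(k)}(\bz)-\bphi^{(k)}(\bz')\|_2\,\|\bz-\bz'\|_2 \le L_\phi\|\bz-\bz'\|_2^2$.

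I expect the main obstacle to be the Lipschitz estimate of the coupling term $\sum_j\lambda_j\bC^\sfT\nabla g_j^{(k)}(\by^{(k)}(\bx))$: obtaining a \emph{joint} Lipschitz constant in $(\bx,\blambda)$ forces the use of compactness/boundedness of both $\cX^{(k)}$ and $\cD^{(k)}$ (to bound $\lambda_j$ and $\nabla g_j^{(k)}(\by^{(k)}(\bx))$ simultaneously), and getting a constant independent of $k$ requires appealing to the uniform-in-$k$ Lipschitz constants of Assumptions~\ref{ass:cost}--\ref{ass:nonlinearconstr} together with uniform bounds on the feasible sets. The monotonicity half is comparatively routine, the only subtlety being that convexity of $\cL^{(k)}(\cdot,\blambda)$ needs $\blambda \ge 0$, which holds because $\cD^{(k)}\subseteq\reals_+^M$ by construction.
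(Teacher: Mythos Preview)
Your proposal is correct and matches the paper's approach: the paper does not give an in-text proof at all but simply refers to \cite[Lemma~5]{bernstein2019}, which is exactly what you propose to invoke, and your self-contained sketch (decompose $\bphi^{(k)}=\bphi_0^{(k)}+\bP$, use the convex--concave saddle structure for monotonicity of $\bphi_0^{(k)}$, then add the regularization to get $\eta_\phi=\min\{p,d\}$, and bound the blocks for Lipschitz continuity) is the standard argument behind that lemma. The only caveat is that your Lipschitz estimate uses boundedness of $\cX^{(k)}$ (to bound $\nabla g_j^{(k)}(\by^{(k)}(\cdot))$), which the present paper does not state explicitly among its assumptions but which is assumed in the cited reference; this is a gap in the paper's assumption list rather than in your argument.
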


Finally, let
\begin{equation}
\label{phi_mapping}
 \hat{\bphi}^{(k)}: \bz \mapsto 
 \left[\begin{array}{c}
 \nabla_{\bx} \cL_{p,d}^{(k)}(\bz) + O(\alpha + \epsilon^2 + e_f + e_y) \\
 - \bg^{(k)}(\hat{\by}^{(k)}) + d \blambda 
\end{array}
\right],
\end{equation}
denote the approximate primal-dual operator associated with \eqref{eqn:equiv_iter}. Using   \cite[Lemma 4]{bernstein2019}, the following result follows.

\begin{lemma}
There exists a constant $\epsy_\phi = O(
\alpha + \epsy^2 + e_f + e_y)$ such that
\begin{align}
\label{eq:errorPhi}
\left\|\bphi^{(k)}(\bz^{(k)}) - \hat{\bphi}^{(k)} (\bz^{(k)}) \right\|_2 \leq \epsy_\phi, \quad \forall k = 1, 2, \ldots
\end{align}
\end{lemma}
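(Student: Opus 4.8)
The plan is to bound the two block components of $\bphi^{(k)}(\bz^{(k)}) - \hat{\bphi}^{(k)}(\bz^{(k)})$ separately and then combine them. Writing $\bz^{(k)} = \{\bx^{(k)}, \blambda^{(k)}\}$, the difference of the dual blocks is $\bg^{(k)}(\by^{(k)}(\bx^{(k)})) - \bg^{(k)}(\hat{\by}^{(k)})$, while the difference of the primal blocks is precisely the residual term $O(\alpha + \epsy^2 + e_f + e_y)$ that appears in \eqref{phi_mapping} and that originates from Lemma~\ref{lem:equiv_dyn} together with the measurement-error discussion following \eqref{eqn:equiv_iter}. Since $\|(\bu,\bv)\|_2 \le \|\bu\|_2 + \|\bv\|_2$ for a stacked vector, it suffices to show that each block is $O(\alpha + \epsy^2 + e_f + e_y)$, uniformly in $k$, and then let $\epsy_\phi$ be the sum of the two bounds.

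First I would handle the dual block. By Assumption~\ref{ass:nonlinearconstr} each $g_j^{(k)}$ has Lipschitz-continuous gradient, so on the (bounded) region visited by the outputs $\by^{(k)}(\bx^{(k)})$ and $\hat{\by}^{(k)}$ each $\bg^{(k)}$ is Lipschitz with a constant $L_g$ independent of $k$; Assumption~\ref{ass:error} then gives $\|\bg^{(k)}(\by^{(k)}(\bx^{(k)})) - \bg^{(k)}(\hat{\by}^{(k)})\|_2 \le L_g\,\|\by^{(k)}(\bx^{(k)}) - \hat{\by}^{(k)}\|_2 \le L_g e_y = O(e_y)$. Next I would account for the primal block by invoking Lemma~\ref{lem:equiv_dyn} with $F = F^{(k)}$: the averaged (every-$T$-steps) primal update equals a projected gradient step on $\cL_{p,d}^{(k)}$ up to an additive error $s_K(\bx) = O(\alpha + \epsy^2)$; replacing the exact outputs by the noisy measurements $\hat{\by}^{(k)}_\pm$ inside $\hat{\nabla}\cL^{(k)}$ contributes a further $O(e_y)$ (using the Lipschitz bounds of Assumptions~\ref{ass:cost}--\ref{ass:nonlinearconstr}, Assumption~\ref{ass:error}, and boundedness of $\blambda^{(k)}\in\cD^{(k)}$); and the time-variation of $f_0^{(k)}, f^{(k)}, g_j^{(k)}$ over the window of $T$ steps contributes $O(e_f)$ by Assumption~\ref{ass:grad_var}, the factor $T$ being an absolute constant. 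Collecting these, the primal block is $O(\alpha + \epsy^2 + e_f + e_y)$ uniformly in $k$, which is exactly how the residual is written in \eqref{phi_mapping}; applying \cite[Lemma 4]{bernstein2019} then packages the entire perturbation into a single constant $\epsy_\phi = O(\alpha + \epsy^2 + e_f + e_y)$, and \eqref{eq:errorPhi} follows.

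The main obstacle is the \emph{uniformity in $k$} of all these estimates. The Taylor remainder in Lemma~\ref{lem:twoPoint}/Lemma~\ref{lem:equiv_dyn} and the Lipschitz-in-output bounds are only useful if the iterates $\bx^{(k)}$ (and the exploration-perturbed points $\bx^{(k)} \pm \epsy\bfprobe^{(k)}$) stay in a fixed bounded set on which the third derivatives of $F^{(k)}$ and the gradients of the $g_j^{(k)}$ are uniformly bounded. The dual iterates are automatically bounded since each $\cD^{(k)}$ is compact; for the primal iterates one uses that the $\cX_i^{(k)}$ are bounded in the settings of interest, so the projection in \eqref{eqn:primal} keeps $\bx^{(k)}\in\cX^{(k)}$ in a fixed bounded set, and the issue disappears. (In the unbounded case one would instead first establish ultimate boundedness via the stability argument of Remark~\ref{r:Jbdd}, which is part of why the final constant is attributed to \cite[Lemma 4]{bernstein2019}.) The remaining steps are routine applications of the triangle inequality and the cited Lipschitz/monotonicity properties.
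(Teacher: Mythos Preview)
Your proposal is correct and matches the paper's approach: the paper does not actually write out a proof for this lemma but simply states that it follows from \cite[Lemma 4]{bernstein2019}, which is exactly the reference you invoke at the end. Note that given how $\hat{\bphi}^{(k)}$ is \emph{defined} in \eqref{phi_mapping}, the primal-block difference is the residual $O(\alpha+\epsy^2+e_f+e_y)$ by construction, so the second paragraph of your proposal (re-deriving that residual via Lemma~\ref{lem:equiv_dyn}, measurement error, and Assumption~\ref{ass:grad_var}) is really justifying the form of \eqref{phi_mapping} rather than the present lemma; only the dual-block Lipschitz bound and the uniformity discussion are genuinely needed here.
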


We use \cite[Theorem 4]{bernstein2019} to show that the approximate time-varying primal-dual algorithm \eqref{eqn:primal}--\eqref{eqn:dual} exhibits the following tracking performance.

\begin{theorem} \label{cor:asym}
Suppose that the step size $\alpha$ is chosen such that $0 < \alpha  < 2 \eta_\phi / L_\phi^{2}$. Moreover, suppose that there exists a scalar $\sigma <  \infty$ such that $\sup_{k \geq 1} \sigma^{(k)} \leq \sigma$. Then, the sequence  $\{\bz^{(k)}\}$ converges Q-linearly to $\{\bz^{(*,k)}\}$ up to an asymptotic error bound given by:
\begin{align} 
\limsup_{k\to\infty} \|\bz^{(k)} - \bz^{(*, k)}\|_2 &\leq \frac{\alpha  \epsy_\phi + \sigma}{1 - c}  \label{eqn:asym_bound_apr}
\end{align}
where
$c :=  \sqrt{1 - 2 \alpha  \eta_\phi + \alpha^2 L_\phi^2} < 1$
and 
    $\epsy_\phi =  O(\alpha + \epsy^2 + e_f + e_y).$

\end{theorem}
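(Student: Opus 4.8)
The plan is to reduce the statement to the contraction analysis underlying \cite[Theorem~4]{bernstein2019}, using the two preceding lemmas as the bridge. First I would record that, by Lemma~\ref{lem:equiv_dyn} and the discussion around \eqref{eqn:equiv_iter}, the iterates produced by steps [S2b]--[S3] coincide — up to the $O(\alpha+\epsy^2+e_f+e_y)$ terms already absorbed into $\hat{\bphi}^{(k)}$ via \eqref{phi_mapping} — with the projected fixed-point recursion $\bz^{(k+1)} = \proj_{\Omega^{(k)}}\{\bz^{(k)} - \alpha\hat{\bphi}^{(k)}(\bz^{(k)})\}$, where $\Omega^{(k)} := \cX^{(k)}\times\cD^{(k)}$. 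Since $\bz^{(*,k)}$ is the unique saddle point of $\cL^{(k)}_{p,d}$, it is a fixed point of the exact map, i.e. $\bz^{(*,k)} = \proj_{\Omega^{(k)}}\{\bz^{(*,k)} - \alpha\bphi^{(k)}(\bz^{(*,k)})\}$ with $\bphi^{(k)}$ from \eqref{phi_mapping_true}.

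Second, I would carry out the standard one-step estimate. Using non-expansiveness of the Euclidean projection and the triangle inequality,
\begin{align*}
\|\bz^{(k+1)} - \bz^{(*,k)}\|_2 &\le \big\| (\bz^{(k)} - \bz^{(*,k)}) - \alpha\big(\bphi^{(k)}(\bz^{(k)}) - \bphi^{(k)}(\bz^{(*,k)})\big) \big\|_2 \\ &\quad + \alpha\big\| \bphi^{(k)}(\bz^{(k)}) - \hat{\bphi}^{(k)}(\bz^{(k)}) \big\|_2 .
\end{align*}
By Lemma~\ref{lem:Phimonotone} (strong monotonicity with constant $\eta_\phi$ and Lipschitz continuity with constant $L_\phi$), the first term is at most $c\,\|\bz^{(k)} - \bz^{(*,k)}\|_2$ with $c := \sqrt{1 - 2\alpha\eta_\phi + \alpha^2 L_\phi^2}$; the hypothesis $0 < \alpha < 2\eta_\phi/L_\phi^2$ forces $\alpha^2 L_\phi^2 < 2\alpha\eta_\phi$ and hence $c<1$. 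The second term is at most $\alpha\epsy_\phi$ by the error bound \eqref{eq:errorPhi}. Thus $\|\bz^{(k+1)} - \bz^{(*,k)}\|_2 \le c\,\|\bz^{(k)} - \bz^{(*,k)}\|_2 + \alpha\epsy_\phi$.

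Third, I would absorb the drift of the optimizer: adding and subtracting $\bz^{(*,k+1)}$ and invoking \eqref{eqn:sigma2} with $\sup_k \sigma^{(k)}\le\sigma$ gives
\[
\|\bz^{(k+1)} - \bz^{(*,k+1)}\|_2 \le c\,\|\bz^{(k)} - \bz^{(*,k)}\|_2 + \alpha\epsy_\phi + \sigma ,
\]
which is exactly the claimed Q-linear contraction of the tracking error toward a ball. Unrolling the recursion yields $\|\bz^{(k)} - \bz^{(*,k)}\|_2 \le c^k\|\bz^{(0)} - \bz^{(*,0)}\|_2 + (\alpha\epsy_\phi+\sigma)\tfrac{1-c^k}{1-c}$, and letting $k\to\infty$ gives $\limsup_k \|\bz^{(k)} - \bz^{(*,k)}\|_2 \le (\alpha\epsy_\phi+\sigma)/(1-c)$, i.e. \eqref{eqn:asym_bound_apr}; the identity $\epsy_\phi = O(\alpha+\epsy^2+e_f+e_y)$ is inherited verbatim from the preceding lemma.

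The main obstacle — the one place where I would need genuine care rather than a citation — is the interface between the continuous-time averaging of Lemma~\ref{lem:equiv_dyn} and the discrete recursion [S2b]--[S3]: the clean equivalence to \eqref{eqn:equiv_iter} is established only when the primal projection is applied once per period (every $T/\Delta t$ steps), whereas the algorithm as stated projects at every step. I would argue that, because the projections are non-expansive, the iterates are bounded, and the sets $\cX^{(k)}$ vary slowly, projecting at every step perturbs the trajectory by an amount that is again $O(\alpha+\epsy^2)$ and can be folded into $\epsy_\phi$; once this reduction is in hand, the contraction argument above is routine and the rest follows directly from \cite[Theorems~4]{bernstein2019}.
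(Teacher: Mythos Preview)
Your proposal is correct and follows exactly the route the paper takes: reduce the model-free iteration to the perturbed primal-dual recursion \eqref{eqn:equiv_iter} via Lemma~\ref{lem:equiv_dyn} and the surrounding discussion, then invoke the contraction estimate of \cite[Theorem~4]{bernstein2019} using Lemma~\ref{lem:Phimonotone} and the operator-error bound \eqref{eq:errorPhi}. The paper simply cites \cite[Theorem~4]{bernstein2019} without writing out the one-step contraction you spell out, so your argument is in fact more explicit; and the ``main obstacle'' you flag --- that the averaging justification in Lemma~\ref{lem:equiv_dyn} assumes periodic rather than per-step projection --- is precisely the gap the paper itself leaves open (see the Conclusions), so you are not missing anything the paper supplies.
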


\section{Model-Free Real-Time Optimal Power Flow}

\label{sec:OPF}
The application considered in this paper is real-time optimization of the power injections of distributed energy resources (DERs) in a distribution system.  The following objectives are pursued: (i)  feeder head/substation power is following a prescribed signal, and (ii) the voltages are regulated within prescribed bounds. This is achieved via formulating an optimal power flow (OPF) problem of the form \eqref{eqn:sampledProblem} and seeking its solutions in real time using the model-free primal-dual framework introduced in Section \ref{sec:pri-dual}.


Consider a distribution system that is described by a set of nodes $\mathcal{N} = \{ 0, \dots, n\}$, where  node $0$ denotes the feeder head (or substation) with fixed voltage; the rest are $PQ$ nodes. 
In this work, we focus on controlling the batteries and PV systems in the distribution system interfaced via power inverters; however, the proposed framework  can be easily extended to include other types of DERs.  
Let   $N_{bt}$ and $N_{pv}$ be the number of batteries and PVs, respectively. 
So the total number of DERs $ N_{der} = N_{bt} + N_{pv}$. 

\subsection{Notation}
For notation brevity,  in this section, we drop the algorithm iteration superscript $^{(k)}$ from variable names, unless otherwise specified.

The distribution system is described by the AC power-flow equations, defining input-output relationship of this physical system.
In particular:
\begin{itemize}
\item $\bx \in \mathbb{R}^{2N_{der}}$ collects all decision variables, where each DER has real and reactive power  $\bx_i = \{\bx_{i,p}, \, \bx_{i,q}\}$;
\item $\bl \in \mathbb{R}^{2n}$ collects the active and reactive power of uncontrollable loads (the ``noise'');
\item $\bv(\bx, \bl) \in \mathbb{R}^n$ collects the voltage magnitudes at every $PQ$ node; and
\item $P_0(\bx, \bl) \in \mathbb{R}$ is the power flow at the feeder head.
\end{itemize}
The (non-linear) maps $(\bv(\bx, \bl), P_0(\bx, \bl))$ exists under some conditions and are defined by the power-flow equations \cite{bernstein2018}. 
The goal is to control the output $(\bv(\bx, \bl), P_0(\bx, \bl))$ by controlling $\bx$.

In order to formulate a \emph{convex} target OPF \eqref{eqn:sampledProblem}, an approximate linearized relationship between the output $\by := (\bv, P_0)$ and the input $(\bx, \bl)$ is leveraged. In particular, at each time step $k$, the output is approximated as
\begin{equation} \label{eqn:lin_pf}
    \by = \bC \bx + \bD \bl + \by_0;
\end{equation}
see, e.g., \cite{bernstein2018}. We would like to emphasize that the relationship \eqref{eqn:lin_pf} is used only to define the desired trajectory via \eqref{eqn:sampledProblem} and \eqref{eq:minmax}; the actual simulation below is performed using the exact (nonlinear) power-flow equations.

\subsection{Objective Function}
  The objective function \eqref{eq:obj_p0} consists of two parts:
\begin{subequations}\label{eq:obj0}
	\begin{align}
	f_0(\bx) = & ({P}_{0}(\bx, \bl) - {P}_0^\bullet)^2   \label{eq:obj_Ptrack}\\
		f_i(\bx_i) = & c_{i,p} \bx_{i,p}^2 + c_{i,q} \bx_{i,q}^2 + c_{i,p^\bullet} (\bx_{i,p} - \bx_{i,p} ^{\bullet})^2 
 \label{eq:obj_DER}
	\end{align}
\end{subequations} 
where $c_{i,p}$, $c_{i,q}$, and $c_{i,p^\bullet}$ are weighting parameters for the DER $i$.
The global cost \eqref{eq:obj_Ptrack} drives the substation real power $P_{0}$ to follow the reference power $P^\bullet_0$, which is usually time-varying. The local DER cost  \eqref{eq:obj_DER} penalizes the control effort and the difference between the control and its desired reference power. For a PV system, the reference power $\bx_{i,p}^\bullet$ is the real-time maximal PV power generation; for a battery system, the reference  $\bx_{i,p} ^{\bullet}$ is the power  to reach the preferred state of charge (SOC), which is usually set as the middle point of SOC to allow maximal operation flexibility.

\subsection{Constraints}
Constraints associated with the OPF are summarized below. 
\subsubsection{Voltage}
Each node needs to maintain a stable voltage magnitude within a certain range. For any node $i \in \mathcal{N}$,
\begin{equation}
	 \shortunderline{V}_i \leq \bv_i(\bx, \bl) \leq  \shortoverline{V}_i
	 \label{eq:constr_v}
\end{equation}

\subsubsection{Battery}
Battery limits are imposed on battery control variables to enable feasible operations. 
Each  battery has its power and energy limits:
\begin{subequations}\label{eq:constr_battery}
	\begin{align*}
	&  \shortunderline{X}_{i,p}  \leq \bx_{i,p} \leq  \shortoverline{X}_{i,p}, \quad \shortunderline{SOC}_i \leq SOC_i \leq \shortoverline{SOC}_i 
	\end{align*}
\end{subequations} 
where $SOC_i$ represents the SOC of battery $i$. 

Consider the battery charging/discharging dynamics 
\[
SOC_i^{(k+1)} = SOC_i^{(k)} + \bx_{i,p}^{(k)} \Delta t,
\]
where $\Delta t$ is the time step in one iteration.
The energy limit can be rewritten as power constraints
\[
\frac{\shortunderline{SOC}_i - SOC_i^{(k+1)}}{\Delta t} \leq \bx_{i,p}^{(k)} \leq \frac{\shortoverline{SOC}_i - SOC_i^{(k+1)}}{\Delta t} 
\]
Letting
\begin{subequations}
\begin{align}
\shortunderline{X}_i^{bt} &= \max\left(\shortunderline{X}_{i,p}\,, \, \frac{\shortunderline{SOC}_i - SOC_i^{(k+1)}}{\Delta t} \right) \nonumber \\
\shortoverline{X}_{i}^{bt} &= \min\left(\shortoverline{X}_{i,p}^{bt}\, , \, \frac{\shortoverline{SOC}_i - SOC_i^{(k+1)}}{\Delta t}\right) \nonumber
\end{align}
\end{subequations}
we obtain following constraints for battery $i$:
\begin{equation}\label{eq:constr_battery2}
	 \shortunderline{X}_i^{bt}  \leq \bx_{i,p} \leq  \shortoverline{X}_i^{bt}, \quad  \bx_{i,p}^2  + \bx_{i,q}^2 \leq  (\shortoverline{S}_i^{bt})^2 
\end{equation} 
where $\shortoverline{S}_i^{bt}$ is the upper bound of battery apparent power.

\subsubsection{PV} 
Driven by time-varying solar radiation, PV systems have time-varying limits. Let $\shortoverline{P}_i^{pv}$ and 
$\shortoverline{S}_i^{pv}$ denote the available activate power and apparent power, respectively. The following constraints are imposed on PV $i$:
\begin{equation}\label{eq:constr_PV}
	 0  \leq \bx_{i,p} \leq  \shortoverline{P}_i^{pv}, \quad   \bx_{i,p}^2  + \bx_{i,q}^2 \leq  (\shortoverline{S}_i^{pv})^2 
\end{equation}

\subsection{Distributed Algorithm Implementation}
The OPF associated with cost function \eqref{eq:obj0} and constraints \eqref{eq:constr_v}, \eqref{eq:constr_battery2},  \eqref{eq:constr_PV} can be reformulated as a saddle-point problem of the form of  \eqref{eq:minmax}, which can then be solved using the model-free primal-dual algorithm proposed in Section \ref{sec:pri-dual}. 

In fact, the Lagrangian \eqref{eq:lagrangian_reg} can be decomposed with respect to each DER.  The cost function \eqref{eq:obj_DER} and constraints \eqref{eq:constr_battery2},  \eqref{eq:constr_PV} are concerned with only local DER information. They have an explicit form of gradient with respect to each local DER control/dual variables. In addition, the measurement-based global information  $f_0(\hat{\by}_+)$ and  $f_0(\hat{\by}_-)$ that is associated with \eqref{eq:obj_Ptrack}, as well as the constraint function $\bg(\hat{\by})$ that is associated with \eqref{eq:constr_v}, can be broadcast to each DER in real time. Therefore, each DER has necessary information to locally update its control and dual variables. This gather-and-broadcast control architecture was previously considered in, e.g., \cite{opfPursuit,unified}, in the model-based context.

One practical concern in the distributed implementation is the asynchronicity in the control signals and measurements. At each time step, it is possible that DERs do not implement their control signals at the same time, and the measurements might not capture the system response after all local control implementations. It is particularly true for fast communication and control. In fact, the asynchronicity can be modeled as an additional noise source, and can be analyzed similarly to, e.g., \cite{fixedPoints}. In the next section, we numerically evaluate the sensitivity of our approach to different levels of noise.

\section{Numerical Study}

\label{sec:sim}
\subsection{Simulation Setup}

Numerical experiments were conducted on a modified IEEE 37-node test feeder to demonstrate the proposed model-free OPF. We used a single-phase variant of the original test feeder \cite{kersting2006distribution}, with DERs connected at different locations. Fig. \ref{fig: IEEE37bus} shows the feeder diagram. Table \ref{t:DER} provides the configuration for these DERs, and the efficiency of the batteries is assumed to be 0.9. 

\begin{figure}[t]
    \centering
    \includegraphics[width=.35\textwidth]{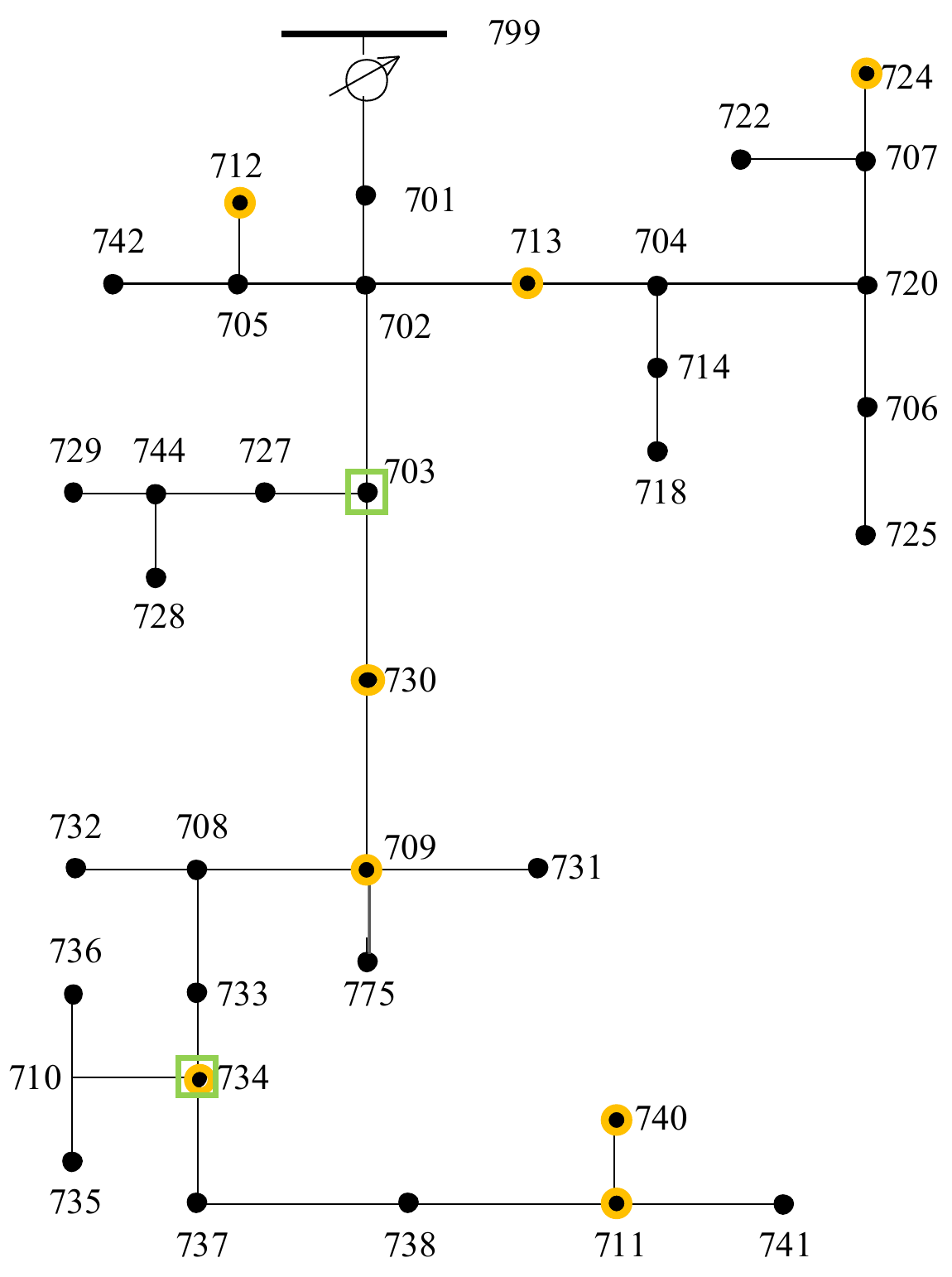}
    \caption{Modified IEEE 37-node test feeder: PV and battery nodes are labeled by circles and squares, respectively.}
    \label{fig: IEEE37bus}
\end{figure}

\begin{table}[]
\caption{Configuration of DERs}
\label{t:DER}
\begin{tabular}{|c|c|c|c|c|}
\hline
\multirow{2}{*}{\textbf{Node}}         & \multirow{2}{*}{\begin{tabular}[c]{@{}c@{}}\textbf{PV}\\  \textbf{Capacity}\\ (kVA)\end{tabular}} & \multicolumn{3}{c|}{\textbf{Battery Capacity}}                                                                                                                                                                                                                             \\ \cline{3-5} 
                              &                                                                              & \begin{tabular}[c]{@{}c@{}}\textbf{SOC}\\ (MWh)\end{tabular} & \begin{tabular}[c]{@{}c@{}}\textbf{Active Power} \\ (MW)\end{tabular} & \begin{tabular}[c]{@{}c@{}}\textbf{Apparent Power}\\ (MVA)\end{tabular} \\ \hline \hline
\textit{703}    & N/A                                                                          &  $[0 \quad 30] $                                                           &  $[-10 \quad 10] $                                                                       & 12                                                                                       \\ \hline
\textit{709}   & 200                                                                          & \multicolumn{3}{c|}{N/A}                                                                                                                                                                                                                                        \\ \hline
\textit{711}   & $200 $                                                                       & \multicolumn{3}{c|}{N/A}                                                                                                                                                                                                                                        \\ \hline
\textit{712} & $200 $                                                                       & \multicolumn{3}{c|}{N/A}                                                                                                                                                                                                                                        \\ \hline
\textit{713}  & $100 $                                                                       & \multicolumn{3}{c|}{N/A}                                                                                                                                                                                                                                        \\ \hline
\textit{724}  & $100$                                                                        & \multicolumn{3}{c|}{N/A}                                                                                                                                                                                                                                        \\ \hline
\textit{730}  & $200$                                                                        & \multicolumn{3}{c|}{N/A}                                                                                                                                                                                                                                        \\ \hline
\textit{734}  & $200$                                                                        & $[0 \quad 30] $                                                           & $[-10 \quad 10] $                                                                 & 12                                                                                       \\ \hline
\textit{740}   & $200$                                                                        & \multicolumn{3}{c|}{N/A}                                                                                                                                                                                                                                        \\ \hline
\end{tabular}
\end{table}

\begin{figure}[h]
    \centering
    \includegraphics[width=.4\textwidth]{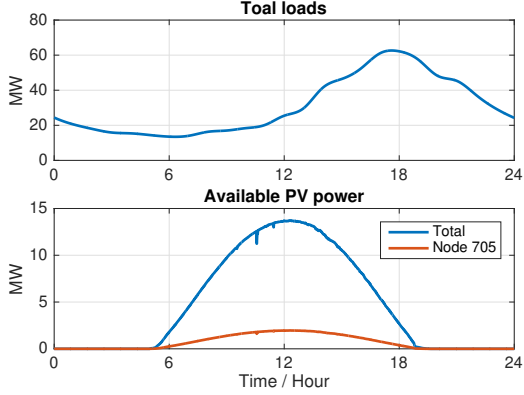}
    \caption{Total load and available PV generation in one day,  and an example of  available PV generation at one node.}
    \label{fig:loadpv}
\end{figure}

\begin{table*}[h]
\caption{Algorithm parameters used in the simulation.}
\begin{tabular}{|c|c|c|c|c|c|c|c|c|c|c|c|}
\hline
 \textbf{Coefficients in the cost function} \eqref{eq:obj_DER} &  $c_{i,p}^{pv}$ & $c_{i,q}^{pv}$ & $c^{pv}_{i,p^\bullet}$ & $c_{i,p}^{bt}$ &$c_{i,p}^{bt}$ & $c_{i,p^\bullet}^{bt}$    \TTop \BBot   \\ \hline 
\textbf{Value }   &  $1\times 10^{-5}$ & $1\times 10^{-5}$ & $1\times 10^{-3}$ & $1/6\times 10^{-4}$ &  $1/6\times 10^{-4}$ &  $1/6\times 10^{-4}$   \TTop \BBot  \\ \hline 
\end{tabular}

  \smallskip
  
\begin{tabular}{|c|c|c|c|c|c|c|c|c|c|c|c|}
\hline
\textbf{Step sizes (control variables)} & $\alpha^{pv}_{i,p} $ & $a^{pv}_{i,q} $ & $a^{bt}_{i,p} $ & $a^{bt}_{i,q} $ \TTop \BBot   \\ \hline
\textbf{Value }    & $2$ &  $2$ &  $12$ & $12$  \TTop \BBot  \\ \hline
\end{tabular}

  \smallskip
  
\begin{tabular}{|c|c|c|c|c|c|c|c|c|c|c|c|}
\hline
\textbf{Step sizes (dual variables)} & $\alpha_{\bar{v}} $ & $\alpha_{\underline{v}} $ & $\alpha^{pv}_{i, \bar{P}} $ & $\alpha^{pv}_{i,\underline{P}} $ & $\alpha^{pv}_{i,\bar{S}}$ & $\alpha^{bt}_{i,\bar{P}} $ & $\alpha^{bt}_{i,\underline{P}} $ & $\alpha^{bt}_{i, \bar{S}}$ \TTop \BBot   \\ \hline
\textbf{Value }    & $10$ &  $10$ &  $1\times10^{-3}$ & $1\times10^{-3}$  & $1\times10^{-3}$  & $2\times10^{-4}$  & $2\times10^{-4}$  & $1\times10^{-3}$  \TTop \BBot  \\ \hline
\end{tabular}

\label{t:parameter}
\end{table*}

In simulation, the load and PV data were obtained from the real data in California \cite{banham2013}. They were smoothed and scaled to an appropriate magnitude for the considered distribution system. The total demand and available PV generation in a typical day are given in Fig. \ref{fig:loadpv}. 



The described distribution system was simulated using Matpower \cite{Matpower}, from where the feeder head active power and node voltages are obtained as system outputs. The output measurements were obtained by adding Gaussian noises to the  simulated outputs; in particular,
\begin{equation}
\hat{\by}^{(k)}_i = \by^{(k)}_i +  W \by^{(k)}_i
\label{e:measureNoise}
\end{equation}
where the random variable $W$ follows the normal distribution $\mathcal{N}(0,\sigma^2)$. In simulations, unless otherwise stated, $\sigma = 0.001$, which is consistent with the standard phasor measurement units noise values \cite{Xie2014,Brown2016}.

Algorithm parameters were configured as shown in Table \ref{t:parameter}.  For each local controller, the sine signal was chosen as the perturbation signal:
\[
\probe_n(t) = \sqrt{2} sin(2 \pi f_n t), \quad t \in [0 \;\; 24 hr]
\]
where the time step $\Delta t =1$ second;  $\epsy = 0.001  \sqrt{2} \, S_{base}$, with base power $S_{base} = 23.04$ MVA; and the frequency $f_n$ was uniquely chosen from the range $[1/7.1, \, 1/26]$ Hz.

\subsection{Simulation Results}


The results described here are based on the 24-hour simulation with the load and PV data given in Fig. \ref{fig:loadpv}.
It is observed from Fig. \ref{fig:Ptrack} that the feeder head power nicely tracks the desired reference power and the DERs can respond quickly to unforeseen changes of the reference signal. The  subplots on the bottom of Fig. \ref{fig:Ptrack} show the detailed response when the reference signal was changed. The power tracking is less accurate during $[15, \, 21 ]$ hours, because the power tracking  was conflicting with another objective of voltage regulation. 
\begin{figure}[h]
    \centering
    \includegraphics[width=.5\textwidth]{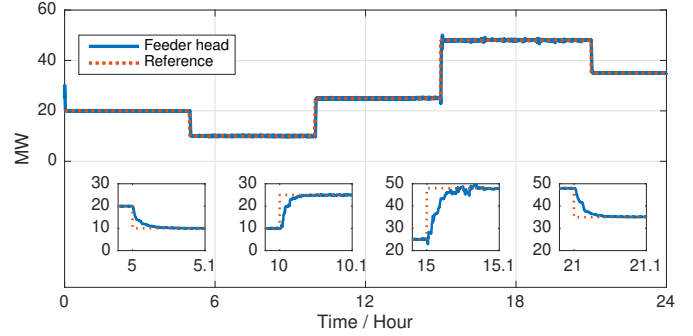}
    \caption{Performance of feeder head power tracking achieved by the gradient-free approach.}
    \label{fig:Ptrack}
\end{figure}

The performance of voltage regulation is illustrated in Fig. \ref{fig:voltage}, which provides the comparison of node voltages with and without voltage regulation. The voltage lower limit was set to 0.96 p.u. At the time around $15$ hours, when some node voltages dropped below the limit, they were recovered quickly to normal values. In contrast, without regulation (no voltage limit), some node voltages stayed below 0.96 p.u. for the next 6 hours. 

\begin{figure}[!b]
    \centering
    \includegraphics[width=0.45\textwidth]{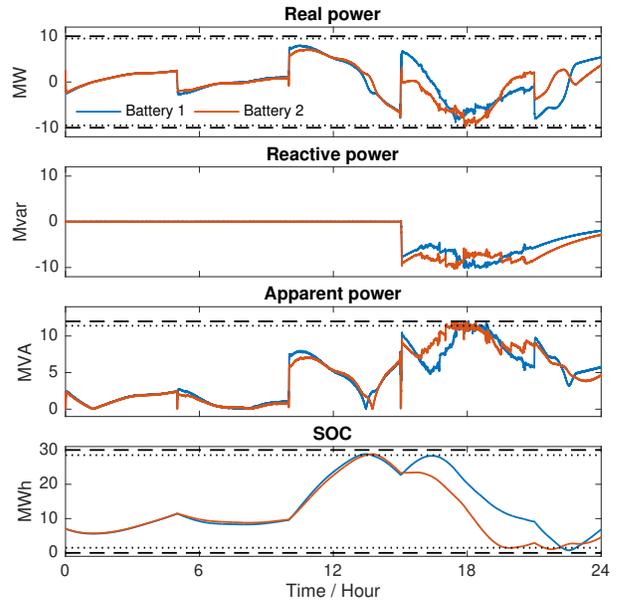}
    \caption{Control results of two battery systems. The dashed lines  represent hard physical constraints imposed on batteries. Corresponding to each  hard constraint, the soft constraint is slightly more conservative than  the hard constraint, shown as the dotted lines. Soft constraints enable smooth transition of control signals to avoid  hard-constraint projections.}
    \label{fig:battery}
\end{figure}

\begin{figure*}[h]
    \centering
    \includegraphics[width=1\textwidth]{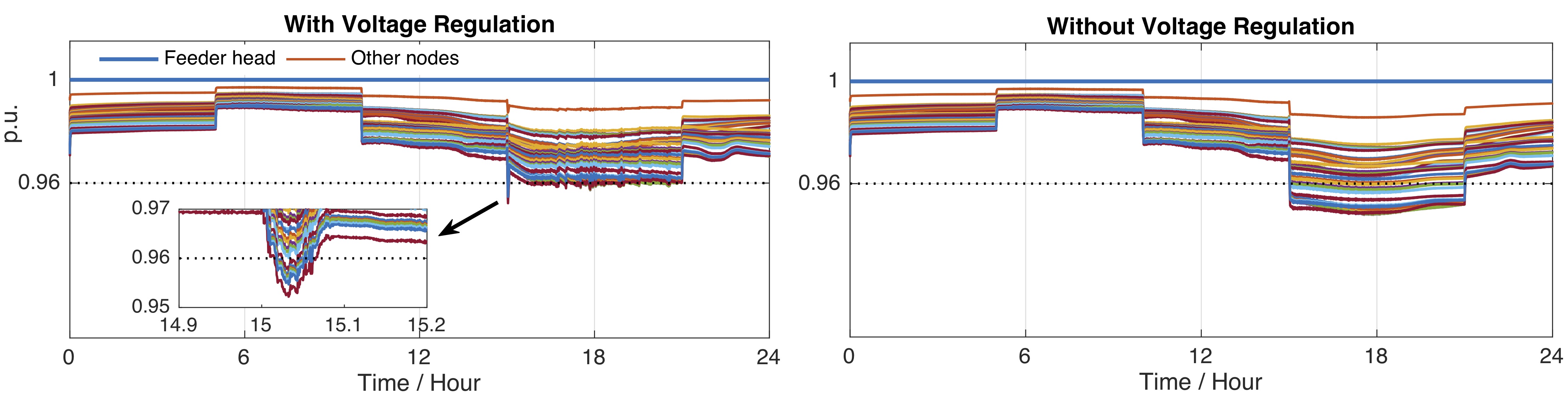}
    \caption{Improvement of voltage profile by the proposed model-free OPF algorithm.}
    \label{fig:voltage}
\end{figure*}

These grid services were provided through the control of battery and PV systems, whose behaviors are presented in Fig. \ref{fig:battery} and Fig. \ref{fig:PVctrl}, respectively. 
The feeder head power was controlled using the active power of DERs, mainly from batteries. The PV systems have its main objective of curtailment minimization, so the active PV power was merely increased because it is already near the maximal generation. But the active PV power can be largely curtailed when the system requested less power generation (e.g., at time 10, 15 hours). 
Because the active power of DERs is determined to track the reference feeder head power, the voltage regulation is mainly accomplished by the reactive power of DERs. At the time around 15 hours when some node voltages dropped below the lower limit, the reactive power from both batteries and PVs acted quickly to recover these voltages to the normal range.

\begin{figure}[!t]
    \centering
    \includegraphics[width=.45\textwidth]{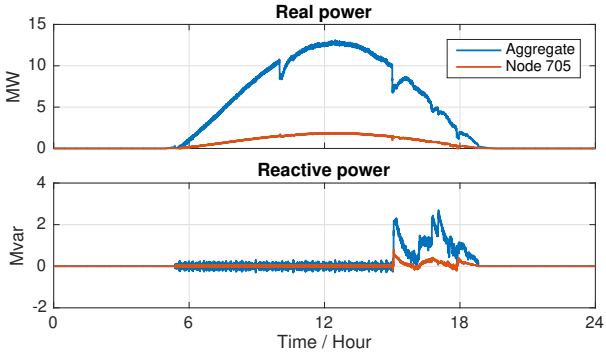}
   \caption{Results of PV power control.}    \label{fig:PVctrl}
\end{figure}


To study the impact of the measurement noise, we tested the proposed algorithm with different levels of noise, distinguished by the standard deviation of the noise signal in \eqref{e:measureNoise}. 
Two metrics are used to quantify system performance. The normalized root mean squared error (NRMSE) is used to define power tracking performance:
\[
    \text{NRMSE} = \sqrt{\frac{1}{K}\sum_{k=1}^K \left(\frac{P_{0}^{(k)} - P_{0}^{\bullet (k)}}{P_{0}^{\bullet(k)}} \right)^2 }
\]
The average voltage violation (AVV) is used to define voltage regulation performance:
\[
\text{AVV} = \frac{1}{NK}\sum_{i=1}^N \sum_{k=1}^K \left([\bv_i^{(k)}- \shortoverline{V}_i]_+ + [\shortunderline{V}_i - \bv_i^{(k)}]_+ \right)
\]
where $N$ and $K$ are the number of nodes and simulation time steps, respectively; the operator $[\;]_+$ projects a negative value to zero.

Fig. \ref{fig:perf_noise} shows simulation results of NRMSE and AVV. As the noise signal increases and crosses the point of $\sigma=1.6\times 10^{-3}$, the control performance degrades significantly. It is because  large measurement noise can suppress the exploration output and leads to inaccurate gradient estimation. One idea to address this issue is to increase the exploration signal to a reasonable magnitude. But the improvement space is limited because of practical system considerations. In that case, we can apply state estimation techniques to filter out measurement noises. It is beyond the scope of this paper and left to our future work.

\begin{figure}[!t]
    \centering
    \includegraphics[width=.5\textwidth]{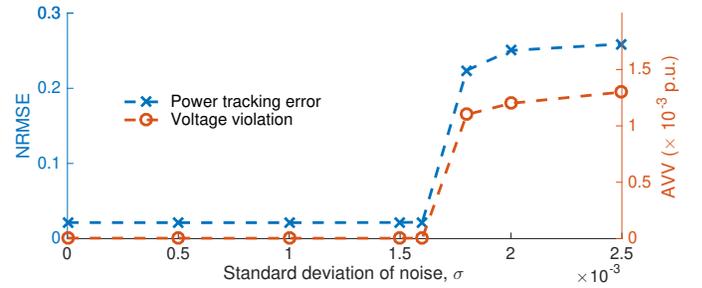}
   \caption{The performance is impacted by the measurement noise.}    \label{fig:perf_noise}
\end{figure}

\section{Conclusions and Future Work}
\label{sec:conclusion}

In this paper, we developed a model-free primal-dual method to track desired trajectories of networked systems. The algorithm leverages the zero-order deterministic estimation of the gradient with two function evaluations. We provided preliminary stability and tracking results, and illustrated an application of the method to real-time OPF in power systems.

Analysis of the exact iteration \eqref{eqn:primal}-\eqref{eqn:dual}, with the projection performed at every time step, remains an interesting research topic. The promising direction here is to analyze the time-varying projected dynamics of the form \eqref{eqn:gf_cont_proj}. Also, analysis of the asynchronous model-free algorithm is important from the practical perspective. Finally, on the application side, it would be interesting to show the performance of the algorithm on a more dynamic scenario, including topology reconfigurations (e.g., due to natural disasters and attacks on the grid).

\bibliographystyle{IEEEtran}

\begin{thebibliography}{10}
	
	\bibitem{ariyur2003real}
	K.~Ariyur and M.~Krstic.
	\newblock {\em Real-Time Optimization by Extremum-Seeking Control}.
	\newblock Wiley-interscience publication. Wiley, 2003.
	
	\bibitem{arikrs03}
	K.~B. Ariyur and M.~Krsti\'c.
	\newblock {\em Real Time Optimization by Extremum Seeking Control}.
	\newblock John Wiley \& Sons, Inc., New York, NY, 2003.
	
	\bibitem{Awerbuch2008}
	B.~Awerbuch and R.~Kleinberg.
	\newblock Online linear optimization and adaptive routing.
	\newblock {\em Journal of Computer and System Sciences}, 74(1):97 -- 114, 2008.
	\newblock Learning Theory 2004.
	
	\bibitem{banham2013}
	J.~Bank and J.~Hambrick.
	\newblock Development of a high resolution, real time, distribution-level
	metering system and associated visualization, modeling, and data analysis
	functions.
	\newblock May 2013.
	
	\bibitem{berchecoldalmehmey19a}
	A.~{Bernstein}, Y.~{Chen}, M.~{Colombino}, E.~{Dall'Anese}, P.~{Mehta}, and
	S.~{Meyn}.
	\newblock Optimal rate of convergence for quasi-stochastic approximation.
	\newblock {\em To appear, {IEEE CDC} and arXiv:1903.07228}, Mar 2019.
	
	\bibitem{fixedPoints}
	A.~{Bernstein} and E.~{Dall'Anese}.
	\newblock Asynchronous and distributed tracking of time-varying fixed points.
	\newblock In {\em 2018 IEEE Conference on Decision and Control (CDC)}, pages
	3236--3243, Dec 2018.
	
	\bibitem{unified}
	A.~{Bernstein} and E.~{Dall’Anese}.
	\newblock Real-time feedback-based optimization of distribution grids: A
	unified approach.
	\newblock {\em IEEE Transactions on Control of Network Systems},
	6(3):1197--1209, Sep. 2019.
	
	\bibitem{bernstein2019}
	A.~{Bernstein}, E.~{Dall'Anese}, and A.~{Simonetto}.
	\newblock Online primal-dual methods with measurement feedback for time-varying
	convex optimization.
	\newblock {\em IEEE Transactions on Signal Processing}, 67(8):1978--1991, April
	2019.
	
	\bibitem{bernstein2018}
	A.~{Bernstein}, C.~{Wang}, E.~{Dall’Anese}, J.~{Le Boudec}, and C.~{Zhao}.
	\newblock Load flow in multiphase distribution networks: Existence, uniqueness,
	non-singularity and linear models.
	\newblock {\em IEEE Transactions on Power Systems}, 33(6):5832--5843, Nov 2018.
	
	\bibitem{bhafumarwan03}
	S.~Bhatnagar, M.~C. Fu, S.~I. Marcus, and I.-J. Wang.
	\newblock Two-timescale simultaneous perturbation stochastic approximation
	using deterministic perturbation sequences.
	\newblock {\em ACM Trans. Model. Comput. Simul.}, 13(2):180--209, 2003.
	
	\bibitem{bor08a}
	V.~S. Borkar.
	\newblock {\em Stochastic Approximation: A Dynamical Systems Viewpoint}.
	\newblock {Hindustan Book Agency and Cambridge University Press (jointly)},
	{Delhi, India and Cambridge, UK}, 2008.
	
	\bibitem{Brown2016}
	M.~{Brown}, M.~{Biswal}, S.~{Brahma}, S.~J. {Ranade}, and H.~{Cao}.
	\newblock Characterizing and quantifying noise in pmu data.
	\newblock In {\em 2016 IEEE Power and Energy Society General Meeting (PESGM)},
	pages 1--5, July 2016.
	
	\bibitem{Bubeck2012}
	S.~Bubeck and N.~Cesa-Bianchi.
	\newblock Regret analysis of stochastic and nonstochastic multi-armed bandit
	problems.
	\newblock {\em Foundations and Trends{\textregistered} in Machine Learning},
	5(1):1--122, 2012.
	
	\bibitem{Chen2019}
	T.~{Chen} and G.~B. {Giannakis}.
	\newblock Bandit convex optimization for scalable and dynamic iot management.
	\newblock {\em IEEE Internet of Things Journal}, 6(1):1276--1286, Feb 2019.
	
	\bibitem{colombino2019}
	M.~{Colombino}, E.~{Dall'Anese}, and A.~{Bernstein}.
	\newblock Online optimization as a feedback controller: Stability and tracking.
	\newblock {\em IEEE Transactions on Control of Network Systems}, pages 1--1,
	2019.
	
	\bibitem{opfPursuit}
	E.~Dall'Anese and A.~Simonetto.
	\newblock Optimal power flow pursuit.
	\newblock {\em IEEE Transactions on Smart Grid}, 9(2):942--952, March 2018.
	
	\bibitem{duchi2015}
	J.~C. {Duchi}, M.~I. {Jordan}, M.~J. {Wainwright}, and A.~{Wibisono}.
	\newblock Optimal rates for zero-order convex optimization: The power of two
	function evaluations.
	\newblock {\em IEEE Transactions on Information Theory}, 61(5):2788--2806, May
	2015.
	
	\bibitem{Fazlyab2016a}
	M.~Fazlyab, C.~Nowzari, G.~J. Pappas, A.~Ribeiro, and V.~M. Preciado.
	\newblock {Self-Triggered Time-Varying Convex Optimization}.
	\newblock In {\em Proceedings of the 55th IEEE Conference on Decision and
		Control}, pages 3090 -- 3097, Las Vegas, NV, US, December 2016.
	
	\bibitem{Flaxman2005}
	A.~D. Flaxman, A.~T. Kalai, and H.~B. McMahan.
	\newblock Online convex optimization in the bandit setting: Gradient descent
	without a gradient.
	\newblock In {\em Proceedings of the Sixteenth Annual ACM-SIAM Symposium on
		Discrete Algorithms}, SODA '05, pages 385--394, Philadelphia, PA, USA, 2005.
	Society for Industrial and Applied Mathematics.
	
	\bibitem{Gao2018}
	X.~Gao, B.~Jiang, and S.~Zhang.
	\newblock On the information-adaptive variants of the admm: An iteration
	complexity perspective.
	\newblock {\em Journal of Scientific Computing}, 76(1):327--363, Jul 2018.
	
	\bibitem{hajinezhad2019}
	D.~{Hajinezhad}, M.~{Hong}, and A.~{Garcia}.
	\newblock Zone: Zeroth order nonconvex multi-agent optimization over networks.
	\newblock {\em IEEE Transactions on Automatic Control}, pages 1--1, 2019.
	
	\bibitem{Hauswirth2016}
	A.~{Hauswirth}, S.~{Bolognani}, G.~{Hug}, and F.~{Dörfler}.
	\newblock Projected gradient descent on riemannian manifolds with applications
	to online power system optimization.
	\newblock In {\em 2016 54th Annual Allerton Conference on Communication,
		Control, and Computing (Allerton)}, pages 225--232, Sep. 2016.
	
	\bibitem{Hauswirth2018}
	A.~{Hauswirth}, I.~{Subotić}, S.~{Bolognani}, G.~{Hug}, and F.~{Dörfler}.
	\newblock Time-varying projected dynamical systems with applications to
	feedback optimization of power systems.
	\newblock In {\em 2018 IEEE Conference on Decision and Control (CDC)}, pages
	3258--3263, Dec 2018.
	
	\bibitem{kersting2006distribution}
	W.~H. Kersting.
	\newblock {\em Distribution system modeling and analysis}.
	\newblock CRC press, 2006.
	
	\bibitem{kiewol52}
	J.~Kiefer and J.~Wolfowitz.
	\newblock Stochastic estimation of the maximum of a regression function.
	\newblock {\em Ann. Math. Statist.}, 23(3):462--466, 09 1952.
	
	\bibitem{Koshal11}
	J.~Koshal, A.~Nedi\'{c}, and U.~Y. Shanbhag.
	\newblock Multiuser optimization: Distributed algorithms and error analysis.
	\newblock {\em {SIAM} J. on Optimization}, 21(3):1046--1081, 2011.
	
	\bibitem{krswan00}
	M.~Krsti\'c and H.-H. Wang.
	\newblock Stability of extremum seeking feedback for general nonlinear dynamic
	systems.
	\newblock {\em Automatica}, 36(4):595 -- 601, 2000.
	
	\bibitem{nes14}
	Y.~Nesterov.
	\newblock {\em Introductory Lectures on Convex Optimization: A Basic Course}.
	\newblock Springer, 1 edition, 2014.
	
	\bibitem{Nesterov2017}
	Y.~Nesterov and V.~Spokoiny.
	\newblock Random gradient-free minimization of convex functions.
	\newblock {\em Foundations of Computational Mathematics}, 17(2):527--566, Apr
	2017.
	
	\bibitem{prabhabhafumar19}
	L.~Prashanth, S.~Bhatnagar, N.~Bhavsar, M.~C. Fu, and S.~Marcus.
	\newblock Random directions stochastic approximation with deterministic
	perturbations.
	\newblock {\em IEEE Transactions on Automatic Control}, 2019.
	
	\bibitem{Rahili2015}
	S.~Rahili and W.~Ren.
	\newblock Distributed continuous-time convex optimization with time-varying
	cost functions.
	\newblock {\em IEEE Transactions on Automatic Control}, 62(4):1590--1605, April
	2017.
	
	\bibitem{shidusujor19}
	B.~{Shi}, S.~S. {Du}, W.~J. {Su}, and M.~I. {Jordan}.
	\newblock {Acceleration via Symplectic Discretization of High-Resolution
		Differential Equations}.
	\newblock {\em arXiv e-prints}, page arXiv:1902.03694, Feb 2019.
	
	\bibitem{shimey2011}
	D.~{Shirodkar} and S.~{Meyn}.
	\newblock Quasi stochastic approximation.
	\newblock In {\em Proceedings of the 2011 American Control Conference}, pages
	2429--2435, June 2011.
	
	\bibitem{spa92}
	J.~C. {Spall}.
	\newblock Multivariate stochastic approximation using a simultaneous
	perturbation gradient approximation.
	\newblock {\em IEEE Transactions on Automatic Control}, 37(3):332--341, March
	1992.
	
	\bibitem{wankrs00}
	H.-H. Wang and M.~{Krsti\'c}.
	\newblock Extremum seeking for limit cycle minimization.
	\newblock {\em IEEE Transactions on Automatic Control}, 45(12):2432--2436, Dec
	2000.
	
	\bibitem{Xie2014}
	L.~{Xie}, Y.~{Chen}, and P.~R. {Kumar}.
	\newblock Dimensionality reduction of synchrophasor data for early event
	detection: Linearized analysis.
	\newblock {\em IEEE Transactions on Power Systems}, 29(6):2784--2794, Nov 2014.
	
	\bibitem{zhamoksrajad18}
	J.~Zhang, A.~Mokhtari, S.~Sra, and A.~Jadbabaie.
	\newblock Direct runge-kutta discretization achieves acceleration.
	\newblock In {\em Proceedings of the International Conference on Neural
		Information Processing Systems (and arXiv:1805.00521)}, pages 3904--3913,
	USA, 2018. Curran Associates Inc.
	
	\bibitem{Matpower}
	R.~D. {Zimmerman}, C.~E. {Murillo-Sanchez}, and R.~J. {Thomas}.
	\newblock Matpower: Steady-state operations, planning, and analysis tools for
	power systems research and education.
	\newblock {\em IEEE Transactions on Power Systems}, 26(1):12--19, Feb 2011.
	
\end{thebibliography}

\end{document}